\theoremstyle{plain}
\newtheorem{dfn}[subsection]{Definition}
\newtheorem{thm}[subsection]{Theorem}
\newtheorem{prp}[subsection]{Proposition}
\newtheorem{cor}[subsection]{Corollary}
\newtheorem{lma}[subsection]{Lemma}
\theoremstyle{remark}
\newtheorem{rmk}[subsection]{Remark}
\newtheorem{exm}[subsection]{Example}
\def\PP{\mathcal{P}}
\def\RR{\mathcal{R}}
\def\QQ{\mathcal{Q}}
\def\EE{\mathcal{E}}
\def\FF{\mathcal{F}}
\def\DD{\mathcal{D}}
\def\AA{\mathcal{A}}
\def\KK{\mathcal{K}}
\def\II{\underline{I}}
\def\uu{\underline{u}}
\def\vv{\underline{v}}
\def\Mod{\mathrm{Mod}}
\def\Pairs{\mathrm{Pairs}}
\def\Oper{\mathrm{Oper}}
\def\Env{\mathrm{Env}}
\def\Hom{\underline{\mathrm{Hom}}}
\def\End{\underline{\mathrm{End}}}
\def\Aut{\mathrm{Aut}}
\def\Alg{\mathrm{Alg}}
\def\Ho{\mathrm{Ho}}
\def\Top{\mathrm{Top}}
\def\Ass{\mathrm{Ass}}
\def\Comon{\mathrm{Comon}}
\def\lrto{\longrightarrow}
\def\dto{\rightrightarrows}
\def\lra{\leftrightarrows}
\def\eqv{\overset\sim\lrto}
\def\ito{\rightarrowtail}
\def\Sg{\Sigma}
\def\al{\alpha}
\def\be{\beta}
\def\eps{\epsilon}
\begin{document}
\title{On the derived category of an algebra over an operad}

\author{Clemens Berger and Ieke Moerdijk}

\date{16 January 2008}

\subjclass{Primary 18D50; Secondary 18G55, 55U35}
\begin{abstract}We present a general construction of the derived category of an algebra over an operad and establish its invariance properties. A central role is played by the enveloping operad of an algebra over an operad.\end{abstract}

\maketitle

\section*{Introduction}

It is a classical device in homological algebra to associate to an associative ring $R$ the homotopy category of differential graded $R$-modules, the so-called \emph{derived category} $\DD(R)$ of $R$. One of the important issues is to know when two rings have equivalent derived categories; positive answers to this question may be obtained by means of the theory of tilting complexes, which is a kind of derived Morita theory, cf. Rickard \cite{R}, Keller \cite{K}, Schwede \cite{S}, To\"en \cite{T}. In this paper, we provide a solution to the problem of giving a suitable construction of the derived category associated to an algebra over an operad in a non-additive context. Building on earlier work of ours' (cf. \cite{BM,BM2} and the Appendix to this paper) we establish general invariance properties of this derived category under change of algebra, change of operad and change of ambient category. In the special case of the operad for differential graded algebras, our construction agrees with the classical one. 

A central role in our proofs is played by the \emph{enveloping operad} $\PP_A$ of $A$ whose algebras are the $\PP$-algebras \emph{under} $A$. Indeed, the monoid of unary operations $\PP_A(1)$ may be identified with the \emph{enveloping algebra} $\Env_\PP(A)$ of $A$. The latter has the characteristic property that $\Env_\PP(A)$-modules (in the classical sense) correspond to $A$-modules (in the operadic sense). This indirect construction of the enveloping algebra occurs in specific cases at several places in the literature (cf. Getzler-Jones \cite{GJ}, Ginzburg-Kapranov \cite{GK}, Fresse \cite{F,F2}, Spitzweck \cite{Sp}, van der Laan \cite{vanL}, Basterra-Mandell \cite{BaM}). The main point in the use of the enveloping operad $\PP_A$ rather than the enveloping algebra $\PP_A(1)$ is that the assignment $(\PP, A)\mapsto\PP_A$ extends to a left adjoint functor which on admissible $\Sg$-cofibrant operads $\PP$ and cofibrant $\PP$-algebras $A$ behaves like a left Quillen functor on cofibrant objects. It is precisely this good homotopical behaviour that allows the definition of the derived category $\DD_\PP(A)$ for \emph{any} $\PP$-algebra $A$.\vspace{1ex}

\emph{Acknowledgements :} We are grateful to B. Jahren, K. Hess and B. Oliver, the organizing committee of the Algebraic Topology Semester 2006 of the Mittag-Leffler-Institut in Stockholm. Most of this work has been carried out during the authors' visit at the MLI in Spring 2006. We are also grateful to B. Fresse for helpful comments on an earlier version of this paper.

\section{Enveloping operads and enveloping algebras}

Let $\EE$ be a (bicomplete) closed symmetric monoidal category. We write $I$ for the unit, $-\otimes-$ for the monoidal structure and $\Hom_\EE(-,-)$ for the internal hom of $\EE$.

This section is a recollection of known results on categories of modules over a $\PP$-algebra $A$, where $\PP$ is any symmetric operad in $\EE$. The main objective of this section is to fix the notations and definitions we use. The category of $A$-modules is a module category in the classical sense for a suitable monoid in $\EE$, the so-called \emph{enveloping algebra} of $A$. We will explain in detail that the enveloping algebra of $A$ is isomorphic to the monoid of unary operations $\PP_A(1)$ of the so-called \emph{enveloping operad} of the pair $(\PP,A)$. The enveloping operad $\PP_A$ is characterised by a universal property which implies in particular that $\PP_A$-algebras are the $\PP$-algebras \emph{under} $A$.

\begin{dfn}\label{maindefinition}Let $A$ be a $\PP$-algebra in $\EE$. An $A$-module (under $\PP$) consists of an object $M$ of $\EE$ together with action maps $$\mu_{n,k}:\PP(n)\otimes A^{\otimes k-1}\otimes M\otimes A^{\otimes n-k}\to M,\quad 1\leq k\leq n,$$subject to the following three axioms:\begin{enumerate}\item (Unit axiom) The operad-unit $I\to\PP(1)$ induces a commutative triangle \begin{diagram}[silent,small,UO]I\otimes M&\rTo^\cong&M\\\dTo&\ruTo_{\mu_{1,1}}&\\\PP(1)\otimes M&&\end{diagram}\item (Associativity axiom) For each $n=n_1+\cdots+n_s\geq 1$, the following diagram commutes:\begin{diagram}[silent,small,UO]\PP(s)\otimes\PP(n_1)\otimes\cdots\otimes\PP(n_s)\otimes A^{\otimes k-1}\otimes M\otimes A^{\otimes n-k}&\rTo^\be&\PP(s)\otimes A^{\otimes l-1}\otimes M\otimes A^{\otimes s-l}\\\dTo^\al&&\dTo_{\mu_{s,l}}\\\PP(n)\otimes A^{\otimes k-1}\otimes M\otimes A^{\otimes n-k}&\rTo_{\mu_{n,k}}&M\end{diagram}where $\al$ is induced by the operad structure of $\PP$, and $\be$ is induced by the $\PP$-algebra structure of $A$ and the $A$-module structure of $M$; in particular, $l$ is the unique natural number such that $n_1+\cdots+n_{l-1}< k\leq n_1+\cdots+n_l$.\item (Equivariance axiom) For each $n\geq 1$, the $\mu_{n,k}$ induce a total action \begin{diagram}[silent,small,UO]\mu_n:\PP(n)\otimes_{\Sg_n} (\coprod_{k=1}^{k=n}A^{\otimes k-1}\otimes M\otimes A^{\otimes n-k}) &\rTo& M\end{diagram}where the symmetric group $\Sg_n$ acts on the coproduct by permuting factors.\vspace{1ex}

A morphism $f:M\to N$ of $A$-modules under $\PP$ is a morphism in $\EE$ rendering commutative all diagrams of the form\begin{diagram}[silent,small,UO]\PP(n)\otimes A^{\otimes k-1}\otimes M\otimes A^{\otimes n-k}&\rTo^{id\otimes id\otimes f\otimes id}&\PP(n)\otimes A^{\otimes k-1}\otimes N\otimes A^{\otimes n-k}\\\dTo^{\mu^M_{n,k}}&&\dTo_{\mu^N_{n,k}}\\M&\rTo_f&N.\end{diagram}\end{enumerate}\end{dfn}

The category of $A$-modules under $\PP$ will be denoted by $\Mod_\PP(A)$; the forgetful functor $(M,\mu)\mapsto M$ will be denoted by $U_A:\Mod_\PP(A)\to\EE$.\vspace{1ex}

\begin{rmk}\label{linear}

The pairs $(A,M)$ consisting of a $\PP$-algebra $A$ and an $A$-module $M$ define a category with morphisms the pairs $(\phi,\psi):(A,M)\to(B,N)$ consisting of a map of $\PP$-algebras $\phi:A\to B$ and a map of $A$-modules $\psi:M\to\phi^*(N)$. This category can be identified with the full subcategory of \emph{left $\PP$-modules} concentrated in degrees $0$ and $1$. In order to make this more explicit, recall that a left $\PP$-module $M$ consists of a collection $(M_k)_{k\geq 0}$ of $\Sg_k$-objects together with a map of collections $\PP\circ M\to M$ satisfying the usual axioms of a left action. Such a collection $(M_k)_{k\geq 0}$ is concentrated in degrees $0$ and $1$ precisely when all $M_k$ for $k\geq 2$ are initial objects in $\EE$. The left $\PP$-module structure restricted to $M_0$ endows $M_0$ with a $\PP$-algebra structure, while the left $\PP$-module structure restricted to $(M_0,M_1)$ amounts precisely to an $M_0$-module structure on $M_1$ under $\PP$.

There is yet another way to specify such a pair $(A,M)$ if $\EE$ is an \emph{additive} category. Recall that a $\PP$-algebra structure on the object $A$ of $\EE$ is equivalent to an operad map $\PP\to\End_A$ taking values in the \emph{endomorphism operad} of $A$. The latter is defined by $$\End_A(n)=\Hom_\EE(A^{\otimes n},A)$$where the operad structure maps are given by substitution and permutation of the factors in the domain. For a pair of objects $A$ and $M$ in an additive category $\EE$, we define a \emph{linear endomorphism operad} $\End_{M|A}$ of $M$ relative to $A$ such that operad maps $\PP\to\End_{M|A}$ correspond to a $\PP$-algebra structure on $A$ together with an $A$-module structure on $M$. 

This linear endomorphism operad $\End_{M|A}$ is defined as a \emph{suboperad} of the endomorphism operad $\End_{M\oplus A}$, where $-\oplus-$ stands for the direct sum in $\EE$. Since $$\Hom_\EE(X\oplus Y,Z\oplus W)\cong\begin{pmatrix} \Hom_\EE(X,Z)&\Hom_\EE(Y,Z)\\\Hom_\EE(X,W)&\Hom_\EE(Y,W)\end{pmatrix}$$with the usual matrix rule for composition, it makes sense to define $\End_{M|A}(n)$ as that subobject of $\End_{M\oplus A}(n)$ that takes the summand $A^{\otimes n}$ to $A$, the summands of the form $A^{\otimes k-1}\otimes M\otimes A^{\otimes n-k}$ to $M$, and all other summands to a null (i.e. initial and terminal) object of $\EE$. It is then readily verified that this subcollection $(\End_{M|A}(n))_{n\geq 0}$ of $(\End_{M\oplus A})(n))_{n\geq 0}$ defines a suboperad $\End_{M|A}$ of $\End_{M\oplus A}$, and that an operad map $\PP\to\End_{M|A}$ determines, and is determined by, a $\PP$-algebra structure on $A$ together with an $A$-module structure on $M$.\end{rmk}

It follows from the preceding considerations that for each $A$-module $M$ in an additive category $\EE$, the direct sum $M\oplus A$ carries a canonical $\PP$-algebra structure, induced by the composite operad map $\PP\to\End_{M|A}\to\End_{M\oplus A}$; the resulting $\PP$-algebra is often denoted by $M\rtimes A$, cf. \cite{GH}. Projection on the second factor defines a map of $\PP$-algebras $M\rtimes A\to A$, hence an object of the category $\Alg_\PP/A$ of $\PP$-algebras over $A$. This assignment extends to a functor $\rho:\Mod_A\to\Alg_\PP/A$. The following lemma is due to Quillen \cite{Q2}; it is the starting point of the definition of the \emph{cotangent complex} of the $\PP$-algebra $A$. 

\begin{lma}Let $A$ be an algebra over an operad $\,\PP$ in an additive, closed symmetric monoidal category $\EE$. Under the above construction, the category of $A$-modules is isomorphic to the category of abelian group objects of $\,\Alg_\PP/A$.\end{lma}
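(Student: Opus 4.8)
The plan is to compare both categories through operad maps into the linear endomorphism operad $\End_{M|A}$ of the preceding Remark. First I would note that the forgetful functor $\Alg_\PP/A\to\EE/A$ creates limits, since $\Alg_\PP\to\EE$ does; hence it carries abelian group objects to abelian group objects and preserves all of their structure maps. It therefore suffices to understand abelian group objects in $\EE/A$ first. Because $\EE$ is additive and bicomplete, any $p:B\to A$ equipped with a section $e:A\to B$ splits as $B\cong M\oplus A$ with $M=\ker(p)$, and a short diagram chase with the unit axioms shows that the only possible abelian group structure over $A$ is the canonical one, with zero section $e(a)=(0,a)$ and addition $\nu(m_1,m_2,a)=(m_1+m_2,a)$ on $B\times_A B\cong M\oplus M\oplus A$. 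Consequently an abelian group object of $\Alg_\PP/A$ is precisely a $\PP$-algebra structure on an object of the form $M\oplus A$ for which the maps $p$, $e$ and $\nu$ are morphisms of $\PP$-algebras (the inversion being then automatically one as well).

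Next I would translate these conditions into properties of the classifying operad map $\theta:\PP\to\End_{M\oplus A}$. Writing $\theta_n$ on the summand of $(M\oplus A)^{\otimes n}$ indexed by the set $S\subseteq\{1,\dots,n\}$ of tensor positions occupied by $M$, let $\phi_S$ and $\psi_S$ denote its $M$- and $A$-components. The requirement that $p$ be a $\PP$-algebra map forces every summand with $S\neq\emptyset$ to land in $M=\ker(p)$ (so $\psi_S=0$) and identifies $\psi_\emptyset$ with the given structure on $A$; the requirement that $e$ be a $\PP$-algebra map forces $\phi_\emptyset=0$, i.e. the summand $A^{\otimes n}$ is sent into $A$ by the structure of $A$. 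These are exactly the first two defining clauses of $\End_{M|A}$.

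The crux, and the step I expect to be the main obstacle, is to show that $\nu$ being a $\PP$-algebra map forces $\phi_S=0$ whenever $|S|\geq 2$, which is the remaining clause of $\End_{M|A}$. Here I would use that the $\PP$-algebra structure on the fibre product $B\times_A B$ is computed componentwise through the two projections, so that compatibility of $\theta_n$ with $\nu$ amounts to the additivity relation $\theta^M_n(\omega;(m^1_k+m^2_k,a_k)_k)=\theta^M_n(\omega;(m^1_k,a_k)_k)+\theta^M_n(\omega;(m^2_k,a_k)_k)$, where $\theta^M_n$ denotes the $M$-component. Since each $\phi_S$ is multilinear in its $M$-arguments, this forces all cross terms, and hence all $\phi_S$ with $|S|\geq 2$, to vanish (for $|S|=2$ the bilinear cross terms $\phi_S(m^1,m^2)$ must cancel identically, giving $\phi_S=0$, and one proceeds by induction on $|S|$). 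Thus $\theta$ factors through $\End_{M|A}$ precisely when $p$, $e$ and $\nu$ are algebra maps, and its remaining components $\phi_{\{k\}}=\mu_{n,k}$ are the module action maps. By the Remark, such factorizations correspond bijectively to $A$-module structures on $M$, the residual $\PP$-algebra axioms on $M\oplus A$ translating into the unit, associativity and equivariance axioms of Definition \ref{maindefinition}. This identification is visibly inverse to the functor $\rho:M\mapsto(M\rtimes A\to A)$, which the same computation run backwards shows does land in abelian group objects; checking naturality is then routine, yielding the asserted isomorphism of categories.
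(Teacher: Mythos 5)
Your proposal is correct and takes essentially the same route as the paper: both split the abelian group object as $M\oplus A$ via its zero section and then use compatibility of the addition map with the $\PP$-algebra structure to force the vanishing of all components with two or more $M$-factors, so that the classifying map $\PP\to\End_{M\oplus A}$ factors through $\End_{M|A}$, this factorization being inverse to $\rho$. The only difference is one of presentation: the paper records the representative arity-$2$ square (its $\mu_2=0$ argument), whereas you carry out the cross-term computation uniformly for all arities and all $S$ with $|S|\geq 2$ (where, incidentally, no induction is needed, since the mixed summands of $(B\times_A B)^{\otimes n}$ are independent and each cross term vanishes separately).
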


\begin{proof}Since $\EE$ is additive, the category of $A$-modules is additive and any $A$-module carries a canonical abelian group structure in $\Mod_A$. By inspection, the functor $\rho:\Mod_A\to\Alg_\PP/A$ preserves finite products, thus abelian group objects, so that for any $A$-module $M$, the image $\rho(M)$ carries a canonical abelian group structure in $\Alg_\PP/A$. The zero element of this abelian group structure is given by the section $A\to M\oplus A$; in particular, the functor $\rho$ is full and faithful, provided $\rho$ is considered as taking values in category of abelian group objects of $\Alg_\PP/A$. 

It remains to be shown that any abelian group object of $\Alg_\PP/A$ arises as $\rho(M)$ for a uniquely determined $A$-module $M$. Indeed, an abelian group object $N\to A$ has a section $A\to N$ by the zero element so that $N$ splits canonically as $N=M\oplus A$. The $\PP$-algebra structure on $N=M\oplus A$ restricts to the given $\PP$-algebra structure on $A$. The abelian group structure $(\al,id_A):(M\oplus M)\oplus A=N\times_AN\lrto N=M\oplus A$ commutes with the $\PP$-algebra structure of $N$; thus, the square\begin{diagram}[small]\PP(2)\otimes (M\oplus M)\otimes (M\oplus M)&\rTo^{id_{\PP(2)}\otimes\al\otimes\al}&\PP(2)\otimes M\otimes M\\\dTo^{\mu_2\oplus\mu_2}&&\dTo_{\mu_2}\\ M\oplus M&\rTo_\al&M\end{diagram}is commutative which implies that $\mu_2$ is zero. This shows that the operad action $\PP\to\End_{M\oplus A}$ factors through $\End_{M|A}$ and we are done.\end{proof}

\begin{lma}\label{Lemma0}Let $\PP$ be an operad. The category of $\PP(0)$-modules under $\PP$ is canonically isomorphic to the module category of the monoid $\PP(1)$.\end{lma}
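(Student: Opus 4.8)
The plan is to exhibit two explicit, mutually inverse functors between $\Mod_\PP(\PP(0))$ and the category of modules over the monoid $\PP(1)$, where $\PP(1)$ is made into a monoid in $\EE$ by operadic composition $\gamma\colon\PP(1)\otimes\PP(1)\to\PP(1)$ with unit $I\to\PP(1)$. Throughout I use that $\PP(0)$ is the initial $\PP$-algebra, whose structure maps are the composition maps $\gamma\colon\PP(m)\otimes\PP(0)^{\otimes m}\to\PP(0)$; in particular its nullary structure map $\PP(0)\to\PP(0)$ is the identity. This last fact is where the hypothesis $A=\PP(0)$ enters essentially.

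In one direction, send a $\PP(0)$-module $(M,\mu)$ to $M$ equipped with the single map $\lambda:=\mu_{1,1}\colon\PP(1)\otimes M\to M$. The unit axiom (1) and the associativity axiom (2) specialised to $n=s=1$, $n_1=1$ assert precisely that $\mu_{1,1}$ is a left action of the monoid $\PP(1)$, so this gives a well-defined functor $F$; on morphisms it is the identity, and the defining condition of a morphism of $A$-modules at $(n,k)=(1,1)$ is exactly the condition of being a map of $\PP(1)$-modules. In the other direction, given a $\PP(1)$-module $(M,\lambda)$, define for each $1\leq k\leq n$ a map $\theta_{n,k}\colon\PP(n)\otimes\PP(0)^{\otimes k-1}\otimes\PP(0)^{\otimes n-k}\to\PP(1)$ by operadic composition, inserting the unit $I\to\PP(1)$ into the $k$-th slot and the given nullary operations into the remaining $n-1$ slots, and set $\mu_{n,k}:=\lambda\circ(\theta_{n,k}\otimes\mathrm{id}_M)$ after moving $M$ past the $\PP(0)$-factors by the symmetry. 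That these $\mu_{n,k}$ satisfy axioms (1)--(3) is a routine consequence of the unit and associativity of $\lambda$ together with the associativity and equivariance of operadic composition; since $\theta_{1,1}=\mathrm{id}_{\PP(1)}$ one recovers $\mu_{1,1}=\lambda$, whence $F\circ G=\mathrm{id}$.

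The heart of the argument, which I expect to be the main obstacle, is the reverse composite $G\circ F$: one must show that for an \emph{arbitrary} $\PP(0)$-module the higher actions are already forced by $\mu_{1,1}$, i.e.
$$\mu_{n,k}=\mu_{1,1}\circ(\theta_{n,k}\otimes\mathrm{id}_M).$$
I would obtain this from a single well-chosen instance of the associativity axiom, applied with total arity $1$ written as $1=n_1+\cdots+n_s$ with $s=n$, $n_k=1$ and $n_i=0$ for $i\neq k$, so that $l=k$. With this decomposition the map $\al$ of axiom (2) is exactly operadic composition $\PP(n)\otimes\PP(0)^{\otimes k-1}\otimes\PP(1)\otimes\PP(0)^{\otimes n-k}\to\PP(1)$, i.e. $\theta_{n,k}$ with a $\PP(1)$ in the $k$-th slot, while $\be$ applies the nullary structure map $\PP(0)\to\PP(0)=\mathrm{id}$ on each of the $n-1$ nullary blocks and $\mu_{1,1}$ on the block carrying $M$. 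Precomposing the whole diagram with the operad unit $I\to\PP(1)$ in the $k$-th slot, the unit axiom (1) collapses $\be$ to the identity, and commutativity of the square becomes precisely the displayed identity.

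Finally I would assemble these pieces. The displayed formula shows that each $\mu_{n,k}$ is built from $\mu_{1,1}$ and the maps $\theta_{n,k}$, which do not involve $M$; hence a morphism commuting with $\mu_{1,1}$ automatically commutes with every $\mu_{n,k}$, so $F$ is full and faithful, and the formula is exactly the statement $G\circ F=\mathrm{id}$. Together with $F\circ G=\mathrm{id}$ this proves that $F$ and $G$ are mutually inverse isomorphisms of categories, identifying $\Mod_\PP(\PP(0))$ with the module category of the monoid $\PP(1)$.
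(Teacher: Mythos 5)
Your proposal is correct and takes essentially the same route as the paper: one direction restricts a $\PP(0)$-module to $\mu_{1,1}$, and the other extends a $\PP(1)$-action via the operadic composition $\PP(n)\otimes\PP(0)^{\otimes k-1}\otimes I\otimes\PP(0)^{\otimes n-k}\to\PP(1)$ together with the symmetry. The paper simply asserts that this extension is unique; your instance of the associativity axiom with $n_k=1$ and $n_i=0$ for $i\neq k$ is precisely the verification it leaves implicit.
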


\begin{proof}For a $\PP(0)$-module $M$ under $\PP$, the action map $\mu_{1,1}:\PP(1)\otimes M\to M$ defines an action on $M$ by the monoid $\PP(1)$. Conversely, an action on $M$ by $\PP(1)$ extends uniquely to action maps $\mu_{n,k}:\PP(n)\otimes\PP(0)^{\otimes k-1}\otimes M\otimes\PP(0)^{\otimes n-k}\to M$ where we use the symmetry of the monoidal structure as well as the operad structure maps $$\PP(n)\otimes\PP(0)^{\otimes k-1}\otimes I\otimes\PP(0)^{\otimes n-k}\to\PP(n)\otimes\PP(0)^{\otimes k-1}\otimes\PP(1)\otimes\PP(0)^{\otimes n-k}\to\PP(1).$$\end{proof}

\begin{dfn}Let $\PP$ be an operad and $A$ be a $\PP$-algebra. 

The \emph{enveloping operad} $\PP_A$ of the $\PP$-algebra $A$ is defined by the universal property that operad maps $\PP_A\to \QQ$ correspond precisely to pairs $(\phi,\psi)$ consisting of an operad map $\phi:\PP\to\QQ$ and a $\PP$-algebra map $\psi:A\to\phi^*\QQ(0)$, and that this correspondence is natural in $\QQ$.\end{dfn}

Alternatively, we can consider the category $\Pairs(\EE)$ of pairs $(\PP,A)$ consisting of an operad $\PP$ and a $\PP$-algebra $A$, with morphisms the pairs $(\phi,\psi):(\PP,A)\to(\QQ,B)$ consisting of an operad map $\phi:\PP\to\QQ$ and a $\PP$-algebra map $\psi:A\to\phi^*(B)$. There is a canonical embedding of the category $\Oper(\EE)$ of operads in $\EE$ into the category $\Pairs(\EE)$ given by $\PP\mapsto(\PP,\PP(0))$. The universal property of the enveloping operad then expresses (provided it exists for all $\PP$ and $A$) that $\Oper(\EE)$ is a \emph{reflective subcategory} of $\Pairs(\EE)$ and that the left adjoint of the embedding is precisely the enveloping operad construction $\Pairs(\EE)\to\Oper(\EE):(\PP,A)\mapsto\PP_A$. In particular, if this left adjoint exists, it preserves all colimits.

\begin{prp}The enveloping operad $\,\PP_A$ exists for any $\PP$-algebra $A$.\end{prp}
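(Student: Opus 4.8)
The plan is to exhibit $\PP_A$ as the value of a left adjoint to the full embedding $\Oper(\EE)\hookrightarrow\Pairs(\EE)$, $\QQ\mapsto(\QQ,\QQ(0))$, by an explicit two-step colimit construction inside the category of operads. Throughout I use that $\Oper(\EE)$ is cocomplete and that free operads exist, as recalled in the Appendix; these are the only facts about $\EE$ beyond bicompleteness and closedness that the argument requires.

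First I would treat the case of a free $\PP$-algebra. Let $F_\PP$ be left adjoint to the forgetful functor $U_\PP\colon\Alg_\PP\to\EE$, and for $X\in\EE$ write $X[0]$ for the collection consisting of $X$ in arity $0$ and the initial object elsewhere. I claim that $\PP_{F_\PP X}$ exists and is the coproduct of operads $\PP\sqcup F_{\Oper}(X[0])$, where $F_{\Oper}$ denotes the free operad functor. Indeed, an operad map out of this coproduct is a pair consisting of an operad map $\phi\colon\PP\to\QQ$ together with a collection map $X[0]\to\QQ$, i.e.\ a map $X\to\QQ(0)$ in $\EE$; by freeness of $F_\PP X$ the latter is the same datum as a $\PP$-algebra map $F_\PP X\to\phi^*\QQ(0)$. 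This is precisely the universal property defining $\PP_{F_\PP X}$, and since both coproducts and free operads exist in $\Oper(\EE)$ the free case is settled.

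For a general $\PP$-algebra $A$ I would invoke the canonical reflexive coequalizer presentation of $A$ by free algebras. Writing $B=F_\PP U_\PP A$ with counit $\epsilon\colon B\to A$, the diagram $F_\PP U_\PP B\dto B\xrightarrow{\;\epsilon\;}A$, with the two parallel maps the counit at $B$ and $F_\PP U_\PP(\epsilon)$, is a coequalizer in $\Alg_\PP$; both of its terms are free. Because $\PP_{(-)}$ is already defined on free algebras, the universal property turns the two structure maps into operad maps $\PP_{F_\PP U_\PP B}\dto\PP_{B}$ under $\PP$, and I define $\PP_A$ to be their coequalizer in $\Oper(\EE)$.

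It then remains to verify the universal property. Applying $\mathrm{Hom}_{\Oper}(-,\QQ)$ to the defining coequalizer produces an equalizer, and this equalizer lives over the set $\mathrm{Hom}_{\Oper}(\PP,\QQ)$ since every operad and every map occurring is under $\PP$. Over a fixed $\phi\colon\PP\to\QQ$, the freeness identifications of the first step rewrite the equalizer as $\mathrm{eq}\big(\mathrm{Hom}_{\Alg_\PP}(B,\phi^*\QQ(0))\dto\mathrm{Hom}_{\Alg_\PP}(F_\PP U_\PP B,\phi^*\QQ(0))\big)$, which by the coequalizer presentation of $A$ is exactly $\mathrm{Hom}_{\Alg_\PP}(A,\phi^*\QQ(0))$. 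Hence $\mathrm{Hom}_{\Oper}(\PP_A,\QQ)$ is naturally identified with the set of pairs $(\phi,\psi)$ consisting of $\phi\colon\PP\to\QQ$ and $\psi\colon A\to\phi^*\QQ(0)$, as required. I expect the only genuine obstacle to be the input that $\Oper(\EE)$ admits the requisite free operads and coequalizers; granting that, together with the standard fact that every algebra over a monad is a reflexive coequalizer of free ones, the remainder is a formal manipulation of representable functors.
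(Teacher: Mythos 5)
Your proof is correct, and it follows the paper's overall two-step skeleton (free algebras first, then the canonical coequalizer presentation), but it handles the free case by a genuinely different and more formal argument. Where the paper simply quotes the explicit arity-wise formula $\PP_{\FF_\PP(X)}(n)=\coprod_{k\geq 0}\PP(n+k)\otimes_{\Sg_k}X^{\otimes k}$ from Getzler--Jones, you construct $\PP_{\FF_\PP(X)}$ abstractly as the coproduct of operads $\PP\sqcup F_{\Oper}(X[0])$, whose universal property is immediate from the free-operad and free-algebra adjunctions; this makes the proof self-contained, needing only cocompleteness of $\Oper(\EE)$ and the existence of free operads (both standard consequences of the standing hypotheses on $\EE$, though note these facts come from \cite{BM} rather than from this paper's Appendix, which concerns a different correction). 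You also spell out the final verification that the paper leaves implicit: that applying $\mathrm{Hom}_{\Oper}(-,\QQ)$ to the coequalizer of operads, fibering over $\mathrm{Hom}_{\Oper}(\PP,\QQ)$, and using the coequalizer presentation of $A$ in $\Alg_\PP$ recovers exactly the pairs $(\phi,\psi)$; this step is correct because both parallel maps $\PP_{\FF_\PP\FF_\PP(A)}\dto\PP_{\FF_\PP(A)}$ are maps under $\PP$, so the equalizer of hom-sets decomposes as a disjoint union over $\phi$. The trade-off is that your abstract description gives no handle on the underlying collection of $\PP_A$, whereas the explicit Getzler--Jones formula is precisely the kind of description that underlies the later homotopical analysis of $\PP_A$ (the cellular-extension arguments of \cite[Section 5]{BM} invoked in Lemma \ref{basic} and Proposition \ref{cofibrantextension}); for the bare existence statement, however, your route is arguably cleaner.
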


\begin{proof}For a free $\PP$-algebra $A={\FF_\PP}(X)$, where $X$ is an object of $\EE$, the enveloping operad of ${\FF_\PP}(X)$ is given by $$\PP_{{\FF_\PP}(X)}(n)=\coprod_{k\geq 0}\PP(n+k)\otimes_{\Sg_k}X^{\otimes k},$$see for instance Getzler and Jones \cite{GJ}. A general $\PP$-algebra $A$ is part of a canonical coequalizer$${\FF_\PP}{\FF_\PP}(A)\dto{\FF_\PP}(A)\to A,$$whence the corresponding coequalizer of operads\begin{gather}\label{coeq1}\PP_{{\FF_\PP}{\FF_\PP}(A)}\dto\PP_{{\FF_\PP}(A)}\to\PP_A\end{gather}has the required universal property of the enveloping operad of $A$.\end{proof}

The identity $\PP_A\to\PP_A$ corresponds by the universal property to an operad map $\eta_A:\PP\to\PP_A$ together with a map of $\PP$-algebras $\bar{\eta}_A:A\to\eta_A^*\PP_A(0)$. We will now show that the latter map is an isomorphism.

\begin{lma}\label{Lemma1}For any $\PP$-algebra $A$, the category of $\PP_A$-algebras is canonically isomorphic to the category of $\PP$-algebras under $A$, and $\PP_A(0)$ is isomorphic to $A$.\end{lma}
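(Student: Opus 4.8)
The plan is to read the isomorphism off directly from the universal property defining $\PP_A$, combined with the standard identification (recalled in Remark~\ref{linear}) of algebra structures with operad maps into endomorphism operads. First I would note that a $\PP_A$-algebra structure on an object $B$ of $\EE$ is the same thing as an operad map $\PP_A\to\End_B$. Applying the universal property of the enveloping operad with $\QQ=\End_B$ (where $\End_B(0)$ carries its canonical, i.e. initial, $\End_B$-algebra structure), such a map corresponds precisely to a pair $(\phi,\psi)$, in which $\phi:\PP\to\End_B$ is an operad map---equivalently a $\PP$-algebra structure on $B$---and $\psi:A\to\phi^*\End_B(0)$ is a morphism of $\PP$-algebras.

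The heart of the matter is then to identify the $\PP$-algebra $\phi^*\End_B(0)$ with $B$ itself, carrying its $\phi$-structure. Here I would use two observations. On one hand, for any operad $\QQ$ the object $\QQ(0)$ is the initial $\QQ$-algebra, since $\QQ(0)=\FF_\QQ(\emptyset)$: in a closed monoidal category $\emptyset^{\otimes n}\cong\emptyset$ for $n\geq 1$, while $\emptyset^{\otimes 0}=I$. On the other hand $\End_B(0)=\Hom_\EE(I,B)\cong B$, the isomorphism following by Yoneda from $\EE(X,\Hom_\EE(I,B))\cong\EE(X\otimes I,B)\cong\EE(X,B)$. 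The point to verify is that under this isomorphism the canonical $\End_B$-algebra structure on $\End_B(0)$---given by operadic composition $\End_B(n)\otimes\End_B(0)^{\otimes n}\to\End_B(0)$---becomes the tautological evaluation action $\Hom_\EE(B^{\otimes n},B)\otimes B^{\otimes n}\to B$ on $B$; this holds because operadic substitution into the arity-$0$ slots is precisely evaluation. Restricting along $\phi$ then gives $\phi^*\End_B(0)\cong B$ as $\PP$-algebras, so the pair $(\phi,\psi)$ is exactly a $\PP$-algebra $B$ together with a $\PP$-algebra map $A\to B$, that is, a $\PP$-algebra under $A$. That this bijection respects morphisms is a routine check: a morphism of $\PP_A$-algebras is a map in $\EE$ intertwining the two induced $\End$-actions, which unwinds to a map of $\PP$-algebras commuting with the structure maps from $A$. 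Naturality shows the resulting isomorphism of categories is compatible with the forgetful functors to $\EE$.

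For the second assertion I would simply transport initial objects across this isomorphism of categories. By the first observation $\PP_A(0)$ is the initial $\PP_A$-algebra, hence corresponds to the initial $\PP$-algebra under $A$, namely $(A,\mathrm{id}_A)$; thus $\PP_A(0)\cong A$. To recover the sharper statement foreshadowed above---that $\bar{\eta}_A:A\to\eta_A^*\PP_A(0)$ is an isomorphism---I would apply the correspondence of the first part to the canonical operad map $\PP_A\to\End_{\PP_A(0)}$: tracing it through (and using $\End_{\PP_A(0)}(0)\cong\PP_A(0)$) shows that the $\PP$-algebra map $A\to\PP_A(0)$ attached to the $\PP_A$-algebra $\PP_A(0)$ is exactly $\bar{\eta}_A$. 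Hence $(\eta_A^*\PP_A(0),\bar{\eta}_A)$ is an initial $\PP$-algebra under $A$, and comparison with $(A,\mathrm{id}_A)$ forces $\bar{\eta}_A$ to be an isomorphism.

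The step I expect to be the main obstacle is the identification of $\phi^*\End_B(0)$ with the $\PP$-algebra $B$, and in particular the compatibility of the isomorphism $\End_B(0)\cong B$ with the canonical and tautological actions. Once this is in place the remainder is formal, flowing from the universal property (equivalently, from $\Oper(\EE)$ being reflective in $\Pairs(\EE)$) and from the preservation of initial objects under an isomorphism of categories.
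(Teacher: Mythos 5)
Your proposal is correct and follows essentially the same route as the paper: the paper's proof likewise reads the isomorphism off the universal property of $\PP_A$ applied to $\QQ=\End_B$, observing that an operad map $\PP_A\to\End_B$ is the same as a $\PP$-algebra structure $\phi:\PP\to\End_B$ on $B$ together with a $\PP$-algebra map $A\to B$, with the functor implemented by the pair $(\eta_A,\bar\eta_A)$. The only difference is one of detail: you spell out the identifications $\End_B(0)\cong B$ and $\QQ(0)\cong\FF_\QQ(\emptyset)$ and the comparison of initial objects, which the paper leaves implicit.
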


\begin{proof}The pair $(\eta_A,\bar{\eta}_A)$ induces a functor from the category of $\PP_A$-algebras to the category of $\PP$-algebras under $A$ which is compatible with the forgetful functors. This functor is an \emph{isomorphism} of categories since a $\PP_A$-algebra structure on $B$ is given equivalently by an operad map $\PP_A\to\End_B$ or by an operad map $\PP\to\End_B$ (i.e. a $\PP$-algebra structure on $B$) together with a map of $\PP$-algebras $A\to B$.\end{proof}

\begin{lma}\label{Lemma2}Let $\PP$ be an operad and $\al:A\to B$ be a map of $\PP$-algebras. Write $B_\al$ for the $\PP_A$-algebra defined by $\al$. The enveloping operad of the $\PP$-algebra $B$ is isomorphic to the enveloping operad of the $\PP_A$-algebra $B_\al$.\end{lma}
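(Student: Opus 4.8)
The plan is to show that $\PP_B$ and $(\PP_A)_{B_\al}$ represent the same functor on the category $\Oper(\EE)$ of operads, and then to conclude by the Yoneda lemma. Both enveloping operads exist by the Proposition above, so it suffices to produce a bijection between $\Oper(\EE)((\PP_A)_{B_\al},\QQ)$ and $\Oper(\EE)(\PP_B,\QQ)$ that is natural in the operad $\QQ$.

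First I would unwind the left-hand side using the defining universal property of the enveloping operad of the $\PP_A$-algebra $B_\al$: an operad map $(\PP_A)_{B_\al}\to\QQ$ is the same as a pair consisting of an operad map $\phi':\PP_A\to\QQ$ together with a $\PP_A$-algebra map $\psi':B_\al\to(\phi')^*\QQ(0)$. The operad map $\phi'$ is in turn governed by the universal property of $\PP_A$: it corresponds to a pair $(\phi,\chi)$ with $\phi:\PP\to\QQ$ an operad map and $\chi:A\to\phi^*\QQ(0)$ a $\PP$-algebra map.

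The key step is to reinterpret the $\PP_A$-algebra data via Lemma \ref{Lemma1}, which identifies $\PP_A$-algebras with $\PP$-algebras under $A$. Under this identification, the $\PP_A$-algebra $(\phi')^*\QQ(0)$ is precisely the $\PP$-algebra $\phi^*\QQ(0)$ equipped with the structure map $\chi:A\to\phi^*\QQ(0)$, while the $\PP_A$-algebra $B_\al$ is $B$ regarded as a $\PP$-algebra under $A$ via $\al$. Consequently the $\PP_A$-algebra map $\psi'$ amounts to a map of $\PP$-algebras $\psi:B\to\phi^*\QQ(0)$ satisfying $\psi\circ\al=\chi$. This is the point I expect to require the most care: I must verify that the $\PP$-algebra-under-$A$ structure on $(\phi')^*\QQ(0)$ produced by Lemma \ref{Lemma1} has structure map exactly the component $\chi$ coming from the decomposition of $\phi'$. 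This amounts to tracing the naturality of the isomorphism of Lemma \ref{Lemma1} together with the universal property of $\PP_A$ through the factorization $\PP_A\to\QQ\to\End_{\QQ(0)}$ given by $\phi'$ followed by the canonical $\QQ$-action on $\QQ(0)$.

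Granting this identification, the data classifying an operad map $(\PP_A)_{B_\al}\to\QQ$ is a triple $(\phi,\chi,\psi)$ in which the constraint $\chi=\psi\circ\al$ is forced; hence $\chi$ is redundant and the data reduces to a pair $(\phi,\psi)$ consisting of an operad map $\phi:\PP\to\QQ$ and a $\PP$-algebra map $\psi:B\to\phi^*\QQ(0)$. By the universal property of the enveloping operad $\PP_B$, this is exactly the data of an operad map $\PP_B\to\QQ$. All the bijections above are natural in $\QQ$, so the Yoneda lemma yields the desired isomorphism $\PP_B\cong(\PP_A)_{B_\al}$.
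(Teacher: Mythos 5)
Your proof is correct and follows essentially the same route as the paper's: both unwind an operad map $(\PP_A)_{B_\al}\to\QQ$ via the universal properties of the enveloping operads together with Lemma \ref{Lemma1}, and conclude that $(\PP_A)_{B_\al}$ and $\PP_B$ classify the same data naturally in $\QQ$. Your explicit decomposition of $\phi'$ into $(\phi,\chi)$ with the constraint $\chi=\psi\circ\al$ forced is precisely the detail the paper compresses into the assertion that the pair $(\phi\eta_A,\psi')$ uniquely determines the operad map one started from.
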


\begin{proof}An operad map $(\PP_A)_{B_\al}\to Q$ gives rise to a pair $(\phi,\psi)$ consisting of an operad map $\phi:\PP_A\to Q$ and a $\PP_A$-algebra map $\psi:B_\al\to\phi^*Q(0)$. According to Lemma \ref{Lemma1}, the latter yields a $\PP$-algebra map $\psi':B\to\eta_A^*\phi^*Q(0)$ (under $A$) for the canonical operad map $\eta_A:\PP\to\PP_A$. Conversely, the pair $(\phi\eta_A,\psi')$ uniquely determines the operad map $(\PP_A)_{B_\al}\to Q$ we started from. Therefore, the enveloping operad $(\PP_A)_{B_\al}$ has the same universal property as the enveloping operad $\PP_B$ so that both operads are isomorphic.\end{proof}

The following proposition is preparatory for the relationship between enveloping operad and enveloping algebra. The result is implicitly used by Goerss and Hopkins, compare \cite[Lemma 1.13]{GH}.


\begin{prp}\label{cotensor}Let $T$ be a monad on a closed symmetric monoidal category $\EE$. 

The category of $T$-algebras is a module category for a monoid $M_T$ in $\EE$ if and only if the tensor-cotensor adjunction of $\EE$ lifts to the category of $T$-algebras along the forgetful functor $U_T:\Alg_T\to\EE$. If the latter is the case, the monad $T$ is isomorphic to $T(I)\otimes(-)$, i.e. the monoid $M_T$ is given by $T(I)$.\end{prp}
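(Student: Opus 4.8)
The plan is to prove the biconditional by treating the two implications separately, and to extract the formula $M_T=T(I)$ from the nontrivial direction. The implication ``module category $\Rightarrow$ lifted adjunction'' is essentially bookkeeping: if $\Alg_T\cong\Mod_{M_T}$ over $\EE$ (compatibly with the forgetful functors), then I would recall that any module category over a monoid $M$ in a closed \emph{symmetric} monoidal category is canonically tensored and cotensored over $\EE$. For an $M$-module $N$ and an object $X$, the object $N\otimes X$ carries an $M$-action obtained by letting $M$ act on $N$ after commuting it past $X$ via the symmetry, and $\Hom_\EE(X,N)$ carries one obtained by precomposing with the evaluation $\Hom_\EE(X,N)\otimes X\to N$; these two constructions are adjoint in $\Mod_M$ and their composites with the forgetful functor $\Mod_M\to\EE$ are the tensor and cotensor of $\EE$. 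Transporting this structure along the given isomorphism $\Alg_T\cong\Mod_{M_T}$ yields the desired lift of the tensor-cotensor adjunction along $U_T$.

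For the converse, write $\FF_T\dashv U_T$ for the free-forgetful adjunction, so that $U_T\FF_T=T$, and set $M:=T(I)=U_T\FF_T(I)$. Denote by $(-)\bar\otimes X$ and $H(X,-)$ the assumed lifts of $-\otimes X$ and $\Hom_\EE(X,-)$ to $\Alg_T$, so that $U_T H(X,C)=\Hom_\EE(X,U_TC)$ and $(-)\bar\otimes X\dashv H(X,-)$ in $\Alg_T$. The heart of the argument is the chain of natural isomorphisms, for a $T$-algebra $C$,
\begin{align*}
\Alg_T(\FF_T(I)\bar\otimes X,\,C) &\cong \Alg_T(\FF_T(I),\,H(X,C)) \\
&\cong \EE(I,\,U_T H(X,C)) = \EE(I,\,\Hom_\EE(X,U_TC)) \\
&\cong \EE(X,\,U_TC) \cong \Alg_T(\FF_T(X),\,C),
\end{align*}
using successively the lifted adjunction, $\FF_T\dashv U_T$, the fact that $H$ covers $\Hom_\EE$, the tensor-cotensor adjunction of $\EE$ together with $I\otimes X\cong X$, and $\FF_T\dashv U_T$ once more. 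By the Yoneda lemma this produces a natural isomorphism $\FF_T(X)\cong\FF_T(I)\bar\otimes X$ of $T$-algebras, and applying $U_T$ gives a natural isomorphism $T(X)\cong T(I)\otimes X=M\otimes X$ of \emph{functors}.

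It remains to upgrade this to an isomorphism of \emph{monads} and to recognise $M$ as a monoid, and this is where the only real obstacle lies. I would transport the monad structure of $T$ along the above functor isomorphism to a monad structure $(\mu',\eta')$ on $M\otimes(-)$, and then show it is of multiplicative type. The tool is the rigidity isomorphism $\mathrm{Nat}(A\otimes(-),B\otimes(-))\cong\EE(A,B)$, valid in any closed symmetric monoidal category (by the calculus of mates applied to the adjunctions $A\otimes(-)\dashv\Hom_\EE(A,-)$, followed by the Yoneda lemma for the internal-hom functors), under which every natural transformation has the form $f\mapsto f\otimes\mathrm{id}$. Applied to $\eta'\colon I\otimes(-)\Rightarrow M\otimes(-)$ and to $\mu'\colon M\otimes M\otimes(-)\Rightarrow M\otimes(-)$ it yields maps $e\colon I\to M$ and $m\colon M\otimes M\to M$ with $\eta'_X=e\otimes\mathrm{id}_X$ and $\mu'_X=m\otimes\mathrm{id}_X$; the delicate point, which I expect to require the most care, is to check that this identification is compatible with composition of natural transformations, so that the monad associativity and unit axioms for $(\mu',\eta')$ translate \emph{exactly} into the monoid axioms for $(M,m,e)$. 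Granting this, $M\otimes(-)$ with its transported structure is precisely the free-module monad of the monoid $M=T(I)$, whence $\Alg_T\cong\Alg_{M\otimes(-)}=\Mod_M$. This proves the converse and simultaneously identifies $M_T=T(I)$.
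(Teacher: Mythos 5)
Your forward direction and the Yoneda-style argument in the converse are sound and essentially identical to the paper's own proof: the paper likewise constructs the lifted module structures on $N\otimes X$ and $\Hom_\EE(X,N)$ via the action and evaluation maps, and likewise deduces from the lifted adjunction a binatural isomorphism of $T$-algebras $\FF_T(X)\bar\otimes Y\cong\FF_T(X\otimes Y)$, which at $X=I$ gives $T\cong T(I)\otimes(-)$ as functors. The genuine gap is in your final step, where you invoke the ``rigidity isomorphism'' $\mathrm{Nat}(A\otimes(-),B\otimes(-))\cong\EE(A,B)$, claimed to hold in any closed symmetric monoidal category. This statement is false. Take $\EE$ to be $\mathbb{Z}$-graded abelian groups with the graded tensor product: since morphisms preserve degree, the family of maps multiplying the degree-$n$ component of every object by an integer $c_n$ is a natural endomorphism of $\mathrm{id}_\EE\cong I\otimes(-)$, so $\mathrm{Nat}(\mathrm{id},\mathrm{id})\cong\prod_{n\in\mathbb{Z}}\mathbb{Z}$, strictly larger than $\EE(I,I)=\mathbb{Z}$. (A similar failure occurs for $G$-sets when $G$ has nontrivial centre; and the graded example shows the failure arises exactly in the contexts this paper cares about.) The flaw in your justification is the phrase ``Yoneda lemma for the internal-hom functors'': after taking mates you have \emph{ordinary} natural transformations $\Hom_\EE(B,-)\Rightarrow\Hom_\EE(A,-)$ between \emph{internal}-hom functors, but the Yoneda lemma applies to the set-valued representables $\EE(B,-)$. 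For internal homs one needs the \emph{enriched} Yoneda lemma, which classifies $\EE$-natural transformations --- and your transported $\eta'$ and $\mu'$ have only been shown to be natural in the ordinary sense, since an ordinary Yoneda argument produces only ordinary naturality.

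To close the gap you would need one of two things. Either show that the lifted tensor-cotensor adjunction makes $\Alg_T$ a tensored and cotensored $\EE$-category in such a way that the isomorphism $T\cong T(I)\otimes(-)$, and hence $\eta'$ and $\mu'$, are $\EE$-natural; then the enriched form of rigidity does apply. Or construct the monoid structure directly, taking $e=\eta_I$ and $m$ to be the composite $T(I)\otimes T(I)\cong T(T(I))\overset{\mu_I}{\lrto}T(I)$, and verify associativity and unitality from the monad axioms together with the coherence of the binatural isomorphism $\FF_T(X)\bar\otimes Y\cong\FF_T(X\otimes Y)$ with respect to the monoidal structure of $\EE$. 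This verification is exactly the content that the paper compresses into the phrase ``in particular, $T(I)$ carries a canonical monoid structure''; the paper is terse at the very same spot, but its assertion does not rest on the false general rigidity statement, whereas your proof, as written, does.
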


\begin{proof}Assume first that the monad $T$ is given by tensoring with a monoid $M_T$. Then the tensor-cotensor adjunction of $\EE$ lifts as follows. For any $M_T$-modules $M,N$ and object $X$ of $\EE$, the tensor $M\otimes X$ inherits an $M_T$-module structure by \begin{diagram}[small]M_T\otimes M\otimes X&\rTo^{\eps_M\otimes X}& M\otimes X;\end{diagram}the cotensor $\Hom_\EE(X,N)$ inherits an $M_T$-module structure by the adjoint of \begin{diagram}[small]M_T\otimes\Hom_\EE(X,N)\otimes X&\rTo^{M_T\otimes ev_X} &M_T\otimes N&\rTo^{\eps_N}& N.\end{diagram}It follows that the adjunction $\EE(M\otimes X,N)\cong\EE(M,\Hom_\EE(X,N))$ lifts to an adjunction $\Alg_T(M\otimes X,N)\cong\Alg_T(M,\Hom_\EE(X,N))$.

Assume conversely that such a lifted tensor-cotensor adjunction exists for a given monad $T$ on $\EE$. By adjointness we get for any objects $X,Y$ binatural isomorphims of $T$-algebras$$F_T(X)\otimes Y\cong F_T(X\otimes Y).$$Since by assumption $U_T$ preserves tensors this implies (setting $X=I$) that the monad $T=U_TF_T$ is isomorphic to $T(I)\otimes(-)$;  in particular, $T(I)$ carries a canonical monoid structure.\end{proof}

\begin{thm}\label{envelope}For any algebra $A$ over an operad $\PP$ in a closed symmetric monoidal category $\EE$, the category of $A$-modules under $\PP$ is canonically isomorphic to the module category of the monoid $\PP_A(1)$.\end{thm}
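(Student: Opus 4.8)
The plan is to present $\Mod_\PP(A)$ as the Eilenberg--Moore category of a monad on $\EE$ to which Proposition \ref{cotensor} applies, and then to identify the resulting monoid with $\PP_A(1)$. First I would check that $U_A:\Mod_\PP(A)\to\EE$ is monadic. The unit and equivariance axioms of Definition \ref{maindefinition} show that the ``naive'' free $A$-module on an object $Y$ (with $Y$ placed in the module slot) is $\coprod_{n\geq 1}\PP(n)\otimes_{\Sg_{n-1}}(A^{\otimes n-1}\otimes Y)$, where $\Sg_{n-1}$ permutes the $A$-factors; imposing the associativity axiom then realises the genuine free $A$-module $T_A(Y)$ as the reflexive coequalizer
$$T_A(Y)=\mathrm{coeq}\Bigl(\coprod_{n\geq 1}\PP(n)\otimes_{\Sg_{n-1}}(\FF_\PP(A)^{\otimes n-1}\otimes Y)\dto\coprod_{n\geq 1}\PP(n)\otimes_{\Sg_{n-1}}(A^{\otimes n-1}\otimes Y)\Bigr),$$
the two maps being induced by the $\PP$-algebra structure $\FF_\PP(A)\to A$ and by operadic composition. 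Since $\EE$ is bicomplete and $U_A$ creates these coequalizers, Beck's theorem yields $\Mod_\PP(A)\cong\Alg_{T_A}$.

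Next I would verify the hypothesis of Proposition \ref{cotensor}: that the tensor--cotensor adjunction of $\EE$ lifts along $U_A$. This is where the defining feature of modules enters, namely that the module slot is \emph{linear}. For an $A$-module $M$ and an object $X$, pulling $X$ out with the symmetry gives
$$\PP(n)\otimes A^{\otimes k-1}\otimes(M\otimes X)\otimes A^{\otimes n-k}\overset\cong\lrto\bigl(\PP(n)\otimes A^{\otimes k-1}\otimes M\otimes A^{\otimes n-k}\bigr)\otimes X\overset{\mu_{n,k}\otimes X}\lrto M\otimes X,$$
which one checks satisfies the axioms of Definition \ref{maindefinition} and thus makes $M\otimes X$ an $A$-module; dually $\Hom_\EE(X,N)$ inherits an $A$-module structure, and together these lift the adjunction $\EE(M\otimes X,N)\cong\EE(M,\Hom_\EE(X,N))$. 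Proposition \ref{cotensor} then shows that $\Mod_\PP(A)$ is the module category of the monoid $T_A(I)$, the free $A$-module on the unit.

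It remains to identify $T_A(I)$ with $\PP_A(1)$, which I expect to be the substantive step. Setting $Y=I$ and reindexing by $k=n-1$, the target of the coequalizer above becomes $\coprod_{k\geq 0}\PP(1+k)\otimes_{\Sg_k}A^{\otimes k}=\PP_{\FF_\PP(A)}(1)$ and its source becomes $\PP_{\FF_\PP\FF_\PP(A)}(1)$, by the formula for the enveloping operad of a free algebra recalled above; moreover the two structure maps coincide with those of the coequalizer (\ref{coeq1}) defining $\PP_A$. Thus the coequalizer computing $T_A(I)$ is exactly (\ref{coeq1}) in arity $1$, giving $T_A(I)\cong\PP_A(1)$. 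The main obstacle is then to confirm that the monoid structure on $T_A(I)$ furnished by Proposition \ref{cotensor}---the multiplication of the monad $T_A$---is carried by this identification to operadic composition $\PP_A(1)\otimes\PP_A(1)\to\PP_A(1)$; establishing this compatibility (together with the verification that the displayed coequalizer really does present the free module) is the only genuinely non-formal part, the monadicity and the tensor--cotensor lifting being routine.
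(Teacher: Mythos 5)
Your strategy coincides with the paper's own: monadicity of $U_A$ via Beck's theorem, lifting of the tensor--cotensor adjunction so that Proposition \ref{cotensor} applies, presentation of the free $A$-module functor by a reflexive coequalizer derived from the free-algebra case, and comparison of that coequalizer at $Y=I$ with the coequalizer (\ref{coeq1}) in arity $1$. (Your $\coprod_{n\geq 1}\PP(n)\otimes_{\Sg_{n-1}}(A^{\otimes n-1}\otimes Y)$ agrees with the paper's $\PP(A,Y)$, since $\Psi(A,Y)(n)$ is the $\Sg_n$-object induced from the $\Sg_{n-1}$-object $A^{\otimes n-1}\otimes Y$.) However, the proposal is not a complete proof: the step you defer in your final sentences --- that the monad multiplication on $T_A(I)$ corresponds under your isomorphism to operadic composition on $\PP_A(1)$ --- is the actual content of the theorem, since an isomorphism of underlying objects alone does not identify the two module categories. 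The paper does not establish this compatibility by computation; it closes the gap with a categorical argument that your proposal is missing: by Lemma \ref{Lemma1} one has $\PP_A(0)\cong A$, so Lemma \ref{Lemma0} applied to the operad $\PP_A$ identifies $\PP_A(1)$-modules with $A$-modules under $\PP_A$; restriction along $\eta_A:\PP\to\PP_A$ then gives a functor \emph{over} $\EE$ from $\PP_A(1)$-modules to $A$-modules under $\PP$, i.e.\ to modules over the monoid $M_A=T_A(I)$ furnished by Proposition \ref{cotensor}; such a functor over $\EE$ between module categories is induced by a map of monoids, and that monoid map is then identified with the isomorphism of coequalizers already in hand. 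Without this argument (or an honest direct verification of the compatibility) your proof stops exactly at its crux.

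There is a second, smaller gap: your claim that ``the coequalizer computing $T_A(I)$ is exactly (\ref{coeq1}) in arity $1$'' presumes that (\ref{coeq1}), which is a coequalizer in $\Oper(\EE)$, is computed arity-wise in $\EE$. This fails for general colimits of operads; it holds here because (\ref{coeq1}) is a \emph{reflexive} coequalizer, and the forgetful functor from operads to collections preserves reflexive coequalizers (operads being monoids for the circle product, which preserves reflexive coequalizers in each variable). The paper makes exactly this observation to produce its diagram (\ref{coeq4}); without it, matching your parallel pair with the arity-$1$ part of (\ref{coeq1}) only yields a canonical comparison map between $T_A(I)$ and $\PP_A(1)$, not an isomorphism.
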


\begin{proof}First of all, it follows immediately from Definition \ref{maindefinition} that the forgetful functor $U_A:\Mod_A\to\EE$ creates colimits (hence permits an application of Beck's tripleability theorem) and allows a lifting to $\Mod_A$ of the tensors and cotensors by objects of $\EE$. In order to apply Proposition \ref{cotensor}, and to compare the resulting monoid to $\PP_A(1)$, we give an explicit description of the left adjoint $\FF_A:\EE\to\Mod_A$ of $U_A$, again following Goerss and Hopkins, compare \cite[Proposition 1.14]{GH}. For objects $M$ and $A$ of $\EE$, denote by $\Psi(A,M)$ the positive collection in $\EE$ given by$$\Psi(A,M)(n)=\coprod_{k=1}^nA^{\otimes k-1}\otimes M\otimes A^{\otimes n-k},\quad n\geq 1,$$the symmetric group $\Sg_n$ acting by permutation of the factors. Moreover, define the object $\PP(A,M)=\coprod_{n\geq 1}\PP(n)\otimes_{\Sg_n}\Psi(A,M).$ The axioms of an $A$-module $M$ amount then to the existence of an action map $\mu_M:\PP(A,M)\to M$ which is unitary and associative in a natural sense. For instance, the associativity constraint uses a canonical isomorphism $(\PP\circ\PP)(A,M)\cong\PP({\FF_\PP}(A),\PP(A,M))$ where ${\FF_\PP}(A)$ is the free $\PP$-algebra on $A$. It follows that for a free $\PP$-algebra $A={\FF_\PP}(X)$ the free $A$-module on $M$ is given by $\PP(X,M)$, the $A$-module structure being induced by the isomorphim just cited. A general $\PP$-algebra $A$ is part of a reflexive coequalizer$${\FF_\PP}{\FF_\PP}(A)\dto{\FF_\PP}(A)\to A$$which is preserved under the forgetful functor $\Alg_\PP\to\EE$. Therefore, the underlying object of the free $A$-module $\FF_A(M)$ on $M$ is part of a reflexive coequalizer in $\EE$ \begin{gather}\label{coeq2}\PP({\FF_\PP}(A),M)\dto\PP(A,M)\to U_A\FF_A(M).\end{gather}Proposition \ref{cotensor} implies that the category $\Mod_A$ is a module category for the monoid $M_A\cong U_A\FF_A(I)$. Putting $M=I$ in (\ref{coeq2}) we end up with the following reflexive coequalizer diagram in $\EE$\begin{gather}\label{coeq3}\PP({\FF_\PP}(A),I)\dto\PP(A,I)\to M_A.\end{gather}

For the second step of the proof observe first that the coequalizer (\ref{coeq1}) in $\Oper(\EE)$ is preserved under the forgetful functor from operads to collections, since operads are monoids in collections with respect to the circle product, and since the coequalizer is reflexive. Therefore we get the following reflexive coequalizer diagram in $\EE$\begin{gather}\label{coeq4}\PP_{{\FF_\PP}{\FF_\PP}(A)}(1)\dto\PP_{{\FF_\PP}(A)}(1)\to\PP_A(1).\end{gather}

It follows from the definitions that (\ref{coeq3}) and (\ref{coeq4}) are isomorphic diagrams in $\EE$. It remains to be shown that the monoid structures of $M_A$ and of $\PP_A(1)$ coincide under this isomorphism.  Lemmas \ref{Lemma0} and \ref{Lemma1} imply that the category of $\PP_A(1)$-modules is isomorphic to the category of $A$-modules under $\PP_A$; the canonical operad map $\eta_A:\PP\to\PP_A$ induces thus a functor (over $\EE$) from the category of $\PP_A(1)$-modules to the category of $A$-modules under $\PP$, and therefore (by Proposition \ref{cotensor}) a map of monoids from $\PP_A(1)$ to $M_A$; this map of monoids may be identified with the isomorphism between the coequalizers of (\ref{coeq4}) and (\ref{coeq3}).\end{proof}

\begin{dfn}\label{envalgebra}For any algebra $A$ over an operad $\PP$ in a closed symmetric monoidal category $\EE$, the \emph{enveloping algebra} $\Env_\PP(A)$ is the monoid $\PP_A(1)$ of unary operations of the enveloping operad of $A$.\end{dfn}

The enveloping algebra construction is a functor that takes maps of pairs $(\phi,\psi):(\PP,A)\to(\QQ,B)$ to maps of monoids $\Env_\PP(A)\to\Env_\QQ(B)$ in $\EE$. Theorem \ref{envelope} shows that the category of $A$-modules under $\PP$ is canonically isomorphic to the module category of the enveloping algebra $\Env_\PP(A)$.

The purpose of the remaining part of this section is to give a sufficient condition for the enveloping algebra $\Env_\PP(A)$ to be a \emph{bialgebra}, i.e. to have a compatible comonoid structure; this amounts to the existence of a monoidal structure on the category of $A$-modules under $\PP$.

Recall that a \emph{Hopf operad} $\PP$ in $\EE$ is by definition an operad in the symmetric monoidal category $\Comon(\EE)$ of comonoids in $\EE$; for such a Hopf operad, a \emph{$\PP$-bialgebra} is defined to be a $\PP$-algebra in $\Comon(\EE)$. Alternatively, a ``Hopf structure'' on an operad $\PP$ amounts to a \emph{monoidal structure} on the category of $\PP$-algebras such that the forgetful functor is strongly monoidal, cf. \cite{Moe}; $\PP$-bialgebras are then precisely comonoids in this monoidal category of $\PP$-algebras.

For any two operads $\PP$ and $\QQ$, the tensor product $\PP\otimes\QQ$ denotes the operad defined by $(\PP\otimes\QQ)(n)=\PP(n)\otimes\QQ(n)$. 

\begin{prp}For any Hopf operad $\PP$ and $\PP$-bialgebra $A$, the enveloping operad $\,\PP_A$ is again a Hopf operad. In particular, the enveloping algebra of $A$ is a bialgebra in $\EE$.\end{prp}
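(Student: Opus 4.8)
The plan is to exploit the universal property of the enveloping operad together with the reflection $\Oper(\EE)\to\Oper(\Comon(\EE))$ that a Hopf structure encodes. Recall that a Hopf operad $\PP$ is precisely an operad in the symmetric monoidal category $\Comon(\EE)$ of comonoids, and that a $\PP$-bialgebra $A$ is a $\PP$-algebra in $\Comon(\EE)$. Thus the pair $(\PP, A)$ lives in $\Pairs(\Comon(\EE))$, and the enveloping operad construction carried out internally to $\Comon(\EE)$ produces an operad $\PP_A^{\Comon}$ in $\Comon(\EE)$, i.e. a Hopf operad. The key point will be to verify that the underlying operad in $\EE$ of this $\PP_A^{\Comon}$ coincides with the ordinary enveloping operad $\PP_A$ computed in $\EE$. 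Granting this, $\PP_A$ inherits a Hopf structure, and then applying the unary part gives that $\PP_A(1) = \Env_\PP(A)$ is a comonoid compatibly with its monoid structure, i.e. a bialgebra.

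First I would check that the forgetful functor $\Comon(\EE)\to\EE$ is symmetric monoidal and creates the relevant colimits, so that the enveloping operad construction is compatible with passage to underlying objects. The explicit construction from the proof of the existence proposition makes this tractable: for a free $\PP$-algebra $\FF_\PP(X)$ one has $\PP_{\FF_\PP(X)}(n)=\coprod_{k\geq 0}\PP(n+k)\otimes_{\Sg_k}X^{\otimes k}$, and this formula is built entirely out of the monoidal product, coproducts, and coinvariants. Since the forgetful functor $\Comon(\EE)\to\EE$ is strong symmetric monoidal and preserves coproducts and reflexive coequalizers (comonoids being coalgebras for a comonad, or more directly because colimits of comonoids are computed underlying when $\EE$ is suitably cocomplete), the Hopf enveloping operad computed in $\Comon(\EE)$ has underlying operad exactly $\PP_A$. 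The general case follows by writing $A$ as the reflexive coequalizer $\FF_\PP\FF_\PP(A)\dto\FF_\PP(A)\to A$ in $\Comon(\EE)$ and using coequalizer~(\ref{coeq1}).

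Concretely, I would argue as follows. Interpret the universal property of Definition of the enveloping operad inside $\Comon(\EE)$: operad maps $\PP_A^{\Comon}\to\QQ$ in $\Comon(\EE)$ correspond to pairs $(\phi,\psi)$ with $\phi:\PP\to\QQ$ a Hopf operad map and $\psi:A\to\phi^*\QQ(0)$ a $\PP$-bialgebra map. Because the forgetful functor to $\EE$ is strong monoidal and preserves the colimits defining the enveloping operad, the underlying operad of $\PP_A^{\Comon}$ satisfies the universal property of $\PP_A$ in $\EE$, hence is canonically isomorphic to $\PP_A$. Transporting the comonoid structure along this isomorphism equips $\PP_A$ with the structure of a Hopf operad. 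Finally, evaluating at arity $1$, the comonoid structure on $\PP_A^{\Comon}(1)$ in $\Comon(\EE)$ is exactly a bialgebra structure on the monoid $\PP_A(1)=\Env_\PP(A)$, since a comonoid object in monoids is the same as a bialgebra.

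The main obstacle I anticipate is the compatibility of the enveloping operad construction with the forgetful functor $\Comon(\EE)\to\EE$ — specifically that the coproducts, $\Sg_k$-coinvariants, and reflexive coequalizers used to build $\PP_A$ are all created by this forgetful functor. Coproducts and reflexive coequalizers of comonoids are delicate in general, but in the present setting the relevant colimits are reflexive and the monoidal product distributes over them, so one can exhibit the comonoid structure on each piece $\PP(n+k)\otimes_{\Sg_k}X^{\otimes k}$ directly (the comultiplication being induced from the comultiplications on $\PP(n+k)$ and on $X$ via the symmetric monoidal structure) and then check these are compatible along the coequalizer. Once this compatibility is in hand, the remainder is a formal transport of structure through the universal property.
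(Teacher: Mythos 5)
Your approach is correct, but it takes a genuinely different route from the paper's. The paper's proof is a short, direct construction: the diagonal of the Hopf operad $\PP$ followed by $\eta_A\otimes\eta_B$ gives an operad map $\PP\to\PP\otimes\PP\to\PP_A\otimes\PP_B$ for any $\PP$-algebras $A,B$, whence by the universal property a canonical map $\PP_{A\otimes B}\to\PP_A\otimes\PP_B$; since $A$ is a $\PP$-bialgebra, its comultiplication $A\to A\otimes A$ is a map of $\PP$-algebras and induces, by functoriality of the enveloping operad, a map $\PP_A\to\PP_{A\otimes A}$, and the composite $\PP_A\to\PP_{A\otimes A}\to\PP_A\otimes\PP_A$ is the desired diagonal. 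This uses only the universal property, functoriality of $(\PP,A)\mapsto\PP_A$, and the arity-wise tensor product of operads --- no colimits and no properties of $\Comon(\EE)$ whatsoever. Your internalization to $\Comon(\EE)$ is costlier: you need that $\Comon(\EE)$ is cocomplete with colimits created by the strong symmetric monoidal forgetful functor $\Comon(\EE)\to\EE$ (true, since comonoids are monoids in $\EE^{\mathrm{op}}$), and that the coequalizer (\ref{coeq1}) of operads is reflexive and hence computed on underlying collections --- a fact the paper itself establishes and uses in the proof of Theorem \ref{envelope}, so it is available to you. In exchange, your route buys something the paper's proof quietly omits: coassociativity and counitality of the diagonal on $\PP_A$ hold automatically, being part of the statement that $\PP_A^{\Comon}$ is an operad in $\Comon(\EE)$, and you get for free that $\eta_A:\PP\to\PP_A$ and the whole functoriality of the construction are maps of Hopf operads. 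One phrasing correction: the underlying operad of $\PP_A^{\Comon}$ does not inherit the universal property of $\PP_A$ ``by restriction'' --- the universal property in $\Comon(\EE)$ quantifies only over Hopf operads $\QQ$, not over arbitrary operads in $\EE$ --- so the valid mechanism is the one you in fact use elsewhere in your argument, namely that the forgetful functor creates all of the colimits (coproducts, $\Sg_k$-coinvariants, reflexive coequalizers) out of which both constructions are built, which identifies the underlying operad of $\PP_A^{\Comon}$ with $\PP_A$ directly.
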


\begin{proof}Any Hopf operad $\PP$ has a diagonal $\PP\to\PP\otimes\PP$. Therefore, for any $\PP$-algebras $A$ and $B$, there is a canonical operad map $\PP\to\PP\otimes\PP\overset{\eta_A\otimes\eta_B}{\lrto}\PP_A\otimes\PP_B$, and hence by the universal property of $\PP_{A\otimes B}$, a canonical operad map $\PP_{A\otimes B}\to\PP_A\otimes\PP_B$. 

If $A$ is a $\PP$-bialgebra, there is a diagonal $A\to A\otimes A$ in the category of $\PP$-algebras and hence a diagonal $\PP_A\to\PP_{A\otimes A}\to\PP_A\otimes\PP_A$ in the category of operads. This shows that $\PP_A$ is a Hopf operad, and that $\PP_A(1)$ is a bialgebra in $\EE$.\end{proof}

\section{The derived category of an algebra over an operad}

In this section, we study the derived category of an algebra over an operad. For any $\PP$-algebra $A$, the derived category $\DD_\PP(A)$ is defined to be the homotopy category of the category of $cA$-modules, where $cA$ is a cofibrant resolution of $A$ in the category of $\PP$-algebras. Thanks to Theorem \ref{envelope}, invariance properties of $\DD_\PP(A)$ correspond to invariance properties of the enveloping algebra $\Env_\PP(A)$. Since the enveloping algebra may be identified with the monoid of unary operations of the enveloping operad $\PP_A$, the methods of \cite{BM} apply (see the Appendix for a small correction to \cite{BM}), and we get quite precise information on the invariance properties of the derived category $\DD_\PP(A)$. 

From now on, $\EE$ denotes a \emph{monoidal model category}. Recall (cf. Hovey \cite{Hov}) that a monoidal model category is simultaneously a closed symmetric monoidal category and a Quillen model category such that two compatibility axioms hold: the \emph{pushout-product axiom} and the \emph{unit axiom}. The unit axiom requires the existence of a cofibrant resolution of the unit $cI\eqv I$ such that tensoring with cofibrant objects $X$ induces weak equivalences $X\otimes cI\eqv X$. The latter is of course automatic if the unit of $\EE$ is already cofibrant. We assume throughout that $\EE$ is \emph{cocomplete} and \emph{cofibrantly generated} as a model category.

For any unitary associative ring $R$, the category of \emph{simplicial $R$-modules} is an additive monoidal model category with weak equivalences (resp. fibrations) those maps of simplicial $R$-modules whose underlying map is a weak equivalence (resp. fibration) of simplicial sets. Similarly, the category of \emph{differential graded $R$-modules} is an additive monoidal model category with weak equivalences (resp. fibrations) the quasi-isomorphisms (resp. epimorphisms) of differential graded $R$-modules. These two examples generalise to any \emph{Grothendieck abelian category} $\AA$ equipped with a set of generators.

Recall from \cite{BM2} that a map of operads is called a \emph{$\Sg$-cofibration} if the underlying map is a cofibration of collections and an operad $\PP$ is called \emph{$\Sg$-cofibrant} if the unique map from the initial operad to $\PP$ is a $\Sg$-cofibration. In particular, for a $\Sg$-cofibrant operad $\PP$, the unit $I\to\PP(1)$ is a cofibration in $\EE$. This terminology differs slightly from \cite{BM} where an operad with the latter property has been called well-pointed.

An operad $\PP$ is called \emph{admissible} if the model structure on $\EE$ transfers to the category $\Alg_\PP$ of $\PP$-algebras along the free-forgetful adjunction $$\FF_\PP:\EE\lra\Alg_\PP:U_\PP,$$ i.e. if $\Alg_\PP$ carries a model structure whose weak equivalences (resp. fibrations) are those maps $f:A\to B$ of $\PP$-algebras for which the underlying map $U_\PP(f):U_\PP(A)\to U_\PP(B)$ is  weak equivalence (resp. fibration) in $\EE$. See \cite[Proposition 4.1]{BM} for conditions on $\PP$ which imply admissibility.

\begin{lma}\label{basic0}For any admissible operad $\PP$ and $\PP$-algebra $A$, the enveloping operad $\PP_A$ is again admissible.\end{lma}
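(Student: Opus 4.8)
The plan is to reduce the admissibility of $\PP_A$ to the admissibility of $\PP$ by means of the identification of categories provided by Lemma \ref{Lemma1}, thereby avoiding a direct application of a transfer theorem to the adjunction $\FF_{\PP_A}\dashv U_{\PP_A}$. By that lemma, the category $\Alg_{\PP_A}$ of $\PP_A$-algebras is canonically isomorphic to the category of $\PP$-algebras under $A$, i.e. to the coslice category $A/\Alg_\PP$; moreover this isomorphism is compatible with the forgetful functors, in the sense that $U_{\PP_A}:\Alg_{\PP_A}\to\EE$ corresponds under it to the composite $A/\Alg_\PP\to\Alg_\PP\overset{U_\PP}{\lrto}\EE$. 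Indeed, a $\PP_A$-algebra and the associated $\PP$-algebra under $A$ have the same underlying object of $\EE$, as is clear from the construction of the isomorphism in the proof of Lemma \ref{Lemma1}.

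First I would invoke the hypothesis that $\PP$ is admissible: this furnishes a transferred model structure on $\Alg_\PP$ whose weak equivalences and fibrations are exactly those maps created by $U_\PP$. Next I would apply the standard fact that for any model category $\mathcal{M}$ and any object $A$, the coslice category $A/\mathcal{M}$ carries a model structure in which a map is a weak equivalence or a fibration precisely when its underlying map in $\mathcal{M}$ is. The essential point is that this coslice model structure exists \emph{unconditionally} once $\mathcal{M}=\Alg_\PP$ is known to be a model category; no further smallness or cofibrant-generation hypotheses are required at this stage, which is precisely what makes the reduction worthwhile.

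It then remains to observe that this coslice model structure on $A/\Alg_\PP$ coincides, under the isomorphism of Lemma \ref{Lemma1}, with the model structure demanded by admissibility of $\PP_A$. A map in $A/\Alg_\PP$ is a weak equivalence (resp. fibration) iff its image in $\Alg_\PP$ is, iff (by admissibility of $\PP$) its underlying map in $\EE$ is; and by the compatibility of forgetful functors noted above, this underlying map is exactly $U_{\PP_A}$ applied to the corresponding map of $\PP_A$-algebras. Hence the weak equivalences and fibrations of the coslice model structure are precisely those created by $U_{\PP_A}$, which is the definition of the transferred model structure on $\Alg_{\PP_A}$. This shows that $\PP_A$ is admissible.

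The entire substance of the argument lies in the reduction via Lemma \ref{Lemma1}, so I do not expect a genuine obstacle; the only point requiring care is the verification that the two forgetful functors to $\EE$ agree under the isomorphism, and that the classes of weak equivalences and fibrations match accordingly. Once this bookkeeping is settled, the conclusion is immediate from the general coslice construction for model categories.
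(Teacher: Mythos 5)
Your proposal is correct and is exactly the argument the paper intends: its proof of this lemma is the single line ``This follows immediately from Lemma \ref{Lemma1}'', and your reduction---identifying $\Alg_{\PP_A}$ with the coslice $A/\Alg_\PP$ compatibly with the forgetful functors, equipping the coslice with its standard model structure inherited from the admissible structure on $\Alg_\PP$, and checking that its weak equivalences and fibrations are precisely those created by $U_{\PP_A}$---is the detailed version of that one-line proof. No gap; your elaboration is the correct bookkeeping behind the paper's ``immediately''.
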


\begin{proof}This follows immediately from Lemma \ref{Lemma1}.\end{proof}

A (trivial) \emph{cellular extension} of $\PP$-algebras is any sequential colimit of pushouts $A\to A[u]$ of the form\begin{diagram}[UO,small,silent]\FF_\PP U_\PP(A)&\rTo^{\eps_A}&A\\\dTo^{\FF_\PP(u)}&&\dTo\\\FF_\PP(X)&\rTo &A[u]\end{diagram}where $\eps_A$ denotes the counit of the free-forgetful adjunction and $u:U_\PP(A)\to X$ is a generating (trivial) cofibration in $\EE$. A \emph{cellular $\PP$-algebra} $A$ is a cellular extension of the initial $\PP$-algebra $\PP(0)$.

\begin{lma}\label{basic}Let $\PP$ be a $\Sg$-cofibrant operad and $A$ be a $\PP$-algebra. If the unique map of $\PP$-algebras $\PP(0)\to A$ is a (trivial) cellular extension of $\PP$-algebras, then the induced map $\PP\to\PP_A$ is a (trivial) $\Sg$-cofibration of operads.\end{lma}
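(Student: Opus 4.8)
The plan is to peel the cellular extension apart into single cell attachments and then to analyse the resulting pushout of operads one arity at a time. Since a cellular extension is by definition a sequential colimit of single-cell pushouts $A_i\to A_{i+1}$, and since the enveloping-operad construction, being a left adjoint $\Pairs(\EE)\to\Oper(\EE)$, sends the (connected) colimits of $\PP$-algebras occurring here — both the pushouts and the sequential colimit — to the corresponding colimits of operads, I would first reduce to one attachment. Indeed $\PP_A$ is the colimit of the $\PP_{A_i}$, and (trivial) $\Sg$-cofibrations of operads are detected on underlying collections, hence closed under transfinite composition; so it suffices to show each $\PP_{A_i}\to\PP_{A_{i+1}}$ is a (trivial) $\Sg$-cofibration. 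I would run this as an induction: assuming $\PP\to\PP_{A_i}$ is already a (trivial) $\Sg$-cofibration, the operad $\QQ:=\PP_{A_i}$ is again $\Sg$-cofibrant (as $\PP$ is and $\Sg$-cofibrations compose). By Lemma \ref{Lemma2} one has $\PP_{A_{i+1}}\cong\QQ_{A_{i+1}}$, and, since a free $\QQ$-algebra is a free $\PP$-algebra under $A_i$, the map $A_i\to A_{i+1}$ is, viewed in $\Alg_\QQ$ via Lemma \ref{Lemma1}, a single cell attachment $B=\QQ(0)[u]$ to the \emph{initial} $\QQ$-algebra $\QQ(0)=A_i$. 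So the whole statement reduces to: for a $\Sg$-cofibrant operad $\QQ$ and a single cell attachment $B=\QQ(0)[u]$, the map $\QQ\to\QQ_B$ is a (trivial) $\Sg$-cofibration.

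For this reduced statement I would again use colimit-preservation. Applying $\QQ_{(-)}$ to the pushout defining $B$ yields a pushout of operads
$$\QQ_B\;\cong\;\QQ\amalg_{\QQ_{\FF_\QQ(Z)}}\QQ_{\FF_\QQ(X)},\qquad Z=U_\QQ(\QQ(0)),$$
where I have used that the enveloping operad of the initial algebra is $\QQ_{\QQ(0)}\cong\QQ$ and that the two free corners are given explicitly by $\QQ_{\FF_\QQ(Y)}(n)=\coprod_{k\ge 0}\QQ(n+k)\otimes_{\Sg_k}Y^{\otimes k}$; the map $\QQ_{\FF_\QQ(Z)}\to\QQ$ is the one given summand-wise by the operadic slot-filling $\QQ(n+k)\otimes\QQ(0)^{\otimes k}\to\QQ(n)$.

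The crux is that this is a pushout of \emph{operads}, so it is not a pushout of underlying collections and the underlying map has to be computed by hand. I would build, in each arity $n$, an exhaustive filtration $\QQ(n)=F_0\subset F_1\subset\cdots$ of $\QQ_B(n)$ — the enveloping-operad analogue of the standard filtration of a free extension of algebras from \cite{BM,BM2} — in which the layer $F_{j-1}\to F_j$ is a pushout of
$$\QQ(n+j)\otimes_{\Sg_j}Q^j(u)\longrightarrow\QQ(n+j)\otimes_{\Sg_j}X^{\otimes j},$$
with $Q^j(u)$ the domain of the $j$-fold pushout-product of $u\colon Z\to X$ (the $Z$-degenerate contributions being absorbed into $F_0=\QQ$ via the slot-filling maps above). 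Granting this description, the pushout-product axiom shows that $Q^j(u)\to X^{\otimes j}$ is a $\Sg_j$-cofibration, trivial whenever $u$ is; tensoring over $\Sg_j$ with $\QQ(n+j)$, which is suitably $\Sg_n\times\Sg_j$-cofibrant because $\QQ$ is $\Sg$-cofibrant, produces a $\Sg_n$-equivariant (trivial) cofibration. Hence every layer is a (trivial) $\Sg$-cofibration, and so is the transfinite composite $\QQ\to\QQ_B$.

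The main obstacle is exactly the construction and verification of this arity-wise filtration. Because the forgetful functor from operads to collections is only a right adjoint, the operadic pushout does not reflect the collection-level pushout, and one must track how the circle-product interactions organise $\QQ_{\FF_\QQ(X)}$ relative to $\QQ$ into layers indexed by the number $j$ of free generators consumed, identifying the $j$-th layer with a pushout-product of $u$ and checking its $\Sg_n\times\Sg_j$-equivariance. This bookkeeping is the technical heart, borrowed from \cite{BM,BM2}; it is precisely here that both hypotheses enter, $\Sg$-cofibrancy of $\QQ$ supplying the equivariant cofibrancy of the coefficients $\QQ(n+j)$ and the cofibration $u$ feeding the pushout-product axiom.
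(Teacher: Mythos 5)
Your proposal is, in substance, a reconstruction of the proof that the paper outsources: the paper's own ``proof'' of Lemma \ref{basic} is a citation of \cite[Proposition 5.4]{BM}, plus the observation that the same argument handles trivial cellular extensions. In \cite{BM} the reduction is organized a little differently from yours: since $\PP_{\FF_\PP(Y)}$ is the coproduct of $\PP$ with the free operad on $Y$ placed in arity $0$, each cell attachment gives $\PP_{A[u]}\cong\PP_A[u_0]$, a free operad extension along a cofibration of collections concentrated in arity $0$, and then the general result on free operad extensions (\cite[Proposition 5.1]{BM}, proved by the coloured-tree filtration that this paper's Appendix corrects) applies. Your route --- rebasing at each stage via Lemma \ref{Lemma2} so that the cell is attached to the \emph{initial} $\QQ$-algebra, then filtering $\QQ_B(n)$ by the number of cells consumed, with pushout-product layers --- replaces the tree bookkeeping by the standard filtration found in Fresse or Goerss--Hopkins, but rests on exactly the same equivariant ingredients. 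Your preliminary reductions are all sound, and carefully so: preservation of \emph{connected} colimits by $(\PP,A)\mapsto\PP_A$, closure of (trivial) $\Sg$-cofibrations under transfinite composition via the forgetful functor to collections, and the identification of $A_i\to A_{i+1}$ as a single cell attached to the initial $\QQ$-algebra are all correct, as is your decision \emph{not} to pretend that operadic pushouts are computed on collections.

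There is, however, one intermediate claim that is false as stated, in precisely the generality this paper works in. The pushout-product axiom does \emph{not} show that $Q^j(u)\to X^{\otimes j}$ is a $\Sg_j$-cofibration in the paper's sense (a cofibration in the transferred model structure on $\Sg_j$-objects): take $\EE$ to be chain complexes over $\mathbb{F}_p$ and $u:0\to\mathbb{F}_p$; then $u^{\square j}$ is $0\to\mathbb{F}_p$ with the trivial, non-projective, $\Sg_j$-action, which is not a $\Sg_j$-cofibration when $p\leq j$. What the pushout-product axiom does give is that $u^{\square j}$ is a $\Sg_j$-equivariant map whose \emph{underlying} map is a (trivial) cofibration. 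Fortunately this weaker statement is all your next step needs, because the relevant lemma is of mixed type: if $W$ is $\Sg_n\times\Sg_j$-cofibrant and $f$ is a $\Sg_j$-equivariant map whose underlying map is a (trivial) cofibration, then $W\otimes_{\Sg_j}f$ is a (trivial) $\Sg_n$-cofibration; this is the content of \cite[Lemma 5.10]{BM} and \cite[Lemma 2.5.3]{BM2}, and it is exactly here that $\Sg$-cofibrancy of $\QQ$ is indispensable. (Indeed, were your stronger claim true, much weaker hypotheses on $\QQ$ would suffice, and the $\Sg$-cofibrancy assumption would lose most of its point; its role in positive characteristic is to compensate for the failure of equivariant cofibrancy of $u^{\square j}$.) With that claim weakened and the mixed lemma invoked, and granting the filtration itself --- which is a fair thing to grant, as it is precisely the technical content of \cite[Section 5]{BM} that the paper also takes as given --- your argument is complete.
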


\begin{proof}The case of a cellular extension is \cite[Proposition 5.4]{BM}. Exactly the same proof applies to a trivial cellular extension as well.\end{proof}

A monoid $M$ in $\EE$ will be called \emph{well-pointed} if the unit map $I\to M$ is a cofibration in $\EE$. In particular, if $I$ is cofibrant then a well-pointed monoid $M$ has a cofibrant underlying object. 

\begin{prp}\label{cofibrantextension}Let $\PP$ be an admissible $\Sg$-cofibrant operad and $A$ be a cofibrant $\PP$-algebra. Then the enveloping operad $\PP_A$ is an admissible $\Sg$-cofibrant operad. In particular, the enveloping algebra $\Env_\PP(A)$ is well-pointed.\end{prp}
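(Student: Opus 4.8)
The plan is to treat separately the admissibility and the $\Sg$-cofibrancy of $\PP_A$, and then to read off well-pointedness of $\Env_\PP(A)$ as a formal consequence. Admissibility is immediate: since $\PP$ is admissible, Lemma \ref{basic0} already guarantees that $\PP_A$ is admissible for \emph{every} $\PP$-algebra $A$, so cofibrancy of $A$ is not needed for this half. The real content is therefore to show that $\PP_A$ is $\Sg$-cofibrant, i.e. that the map from the initial operad to $\PP_A$ has underlying map a cofibration of collections.

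First I would dispose of the case where $A$ is not merely cofibrant but \emph{cellular}, so that the unique map $\PP(0)\to A$ is a cellular extension of $\PP$-algebras. Here Lemma \ref{basic} applies directly and yields that $\eta_A:\PP\to\PP_A$ is a $\Sg$-cofibration. As $\PP$ is $\Sg$-cofibrant, the map from the initial operad to $\PP$ is also a $\Sg$-cofibration, and $\Sg$-cofibrations --- being precisely the operad maps whose underlying map is a cofibration of collections --- are stable under composition. Hence the composite from the initial operad through $\PP$ to $\PP_A$ is a $\Sg$-cofibration, and $\PP_A$ is $\Sg$-cofibrant whenever $A$ is cellular.

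To pass from cellular to general cofibrant $A$, I would use the standard fact that in a cofibrantly generated model category every cofibrant object is a retract of a cell complex. Applied to the transferred model structure on $\Alg_\PP$, whose generating cofibrations are the maps $\FF_\PP(u)$ and whose cell complexes from $\PP(0)$ are exactly the cellular $\PP$-algebras, this says that a cofibrant $\PP$-algebra $A$ is a retract of some cellular $\PP$-algebra $\bar A$, with section and retraction both maps of $\PP$-algebras. Since the enveloping-operad construction is a left adjoint $\Pairs(\EE)\to\Oper(\EE)$, and in particular functorial in $A$ for fixed $\PP$, it carries this retract diagram to a retract diagram $\PP_A\to\PP_{\bar A}\to\PP_A$ of operads with identity composite. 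Thus the map from the initial operad to $\PP_A$ is a retract, in the arrow category of collections, of the corresponding map to $\PP_{\bar A}$; the latter is a cofibration of collections by the cellular case just treated, and cofibrations are closed under retracts, so $\PP_A$ is $\Sg$-cofibrant.

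Finally, the ``in particular'' clause is automatic: by Definition \ref{envalgebra} one has $\Env_\PP(A)=\PP_A(1)$, and for any $\Sg$-cofibrant operad the unit $I\to\PP_A(1)$ is a cofibration in $\EE$, which is exactly well-pointedness of $\Env_\PP(A)$. The one step that deserves care is the retract argument: one must confirm that the functoriality of $A\mapsto\PP_A$ produces a genuine retract of \emph{operads} (equivalently, of underlying collections) lying over the retract of $\PP$-algebras, and that the class of $\Sg$-cofibrations, being detected on underlying collections, is retract-closed. Given Lemmas \ref{basic0} and \ref{basic}, no new homotopical input is required beyond this essentially formal bookkeeping.
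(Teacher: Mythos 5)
Your proposal is correct and follows essentially the same route as the paper: admissibility via Lemma \ref{basic0}, then $\Sg$-cofibrancy by applying Lemma \ref{basic} to the cellular case and transferring to general cofibrant $A$ by the retract argument, with well-pointedness of $\Env_\PP(A)=\PP_A(1)$ read off from $\Sg$-cofibrancy of $\PP_A$. The paper compresses the retract step into one sentence; your explicit justification via functoriality of $(\PP,A)\mapsto\PP_A$ and retract-closure of cofibrations of collections is exactly the intended bookkeeping.
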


\begin{proof}Admissibility was dealt with in Lemma \ref{basic0}. Any cofibrant $\PP$-algebra is retract of a cellular $\PP$-algebra, whence $\PP_A$ is retract of $\Sg$-cofibrant operad and therefore $\Sg$-cofibrant. The second statement follows from the identification of the enveloping algebra $\Env_\PP(A)$ with $\PP_A(1)$.\end{proof}

\begin{cor}\label{invariance}Let $\PP$ be an  admissible $\Sg$-cofibrant operad and $\al:A\to B$ be a weak equivalence of cofibrant $\PP$-algebras. The induced map $\PP_\al:\PP_A\to\PP_B$ is a weak equivalence of admissible $\Sg$-cofibrant operads. In particular, the induced map of enveloping algebras $\Env_\PP(A)\to\Env_\PP(B)$ is a weak equivalence of well-pointed monoids.\end{cor}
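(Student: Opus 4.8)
The plan is to reduce the statement to the structural results already established. By Proposition~\ref{cofibrantextension} both $\PP_A$ and $\PP_B$ are admissible $\Sg$-cofibrant operads, and $\Env_\PP(A)=\PP_A(1)$, $\Env_\PP(B)=\PP_B(1)$ are well-pointed monoids; so the only thing left to prove is that $\PP_\al:\PP_A\to\PP_B$ is a weak equivalence of operads, i.e.\ an arity-wise weak equivalence in $\EE$. The enveloping operad construction is functorial, so for fixed $\PP$ it defines a functor $\PP_{(-)}:\Alg_\PP\to\Oper(\EE)$, and $\Oper(\EE)$ carries the arity-wise weak equivalences, which satisfy two-out-of-three and are closed under retracts. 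Since $\PP$ is admissible, $\Alg_\PP$ is a model category, so by Ken Brown's lemma it suffices to check that $\PP_{(-)}$ sends \emph{trivial cofibrations} between cofibrant $\PP$-algebras to weak equivalences of operads. Hence I may assume from now on that $\al:A\to B$ is a trivial cofibration of cofibrant $\PP$-algebras.

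First I would transport the problem from $\PP$ to $\PP_A$. By Lemma~\ref{Lemma2} the enveloping operad $\PP_B$ is canonically isomorphic to the enveloping operad $(\PP_A)_{B_\al}$ of the $\PP_A$-algebra $B_\al$ determined by $\al$, and under this isomorphism $\PP_\al$ is identified with the canonical map $\eta_{B_\al}:\PP_A\to(\PP_A)_{B_\al}$ (a formal consequence of the two universal properties). By Proposition~\ref{cofibrantextension} the operad $\PP_A$ is admissible and $\Sg$-cofibrant, so $\Alg_{\PP_A}$ is again a model category. Because the forgetful functor $U_{\PP_A}$ agrees with $U_\PP$ on underlying objects (Lemma~\ref{Lemma1}), the transferred model structure on $\Alg_{\PP_A}$ agrees with the model structure on the coslice $A\downarrow\Alg_\PP$ induced from $\Alg_\PP$, since both have the same underlying category, weak equivalences and fibrations. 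Consequently the structural map $\PP_A(0)\cong A\to B_\al$ is a trivial cofibration of $\PP_A$-algebras, because $\al:A\to B$ is one of $\PP$-algebras.

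It now remains to show that $\eta_{B_\al}:\PP_A\to(\PP_A)_{B_\al}$ is a weak equivalence. Using the small object argument in the cofibrantly generated category $\Alg_{\PP_A}$, I factor $\PP_A(0)\to B_\al$ as a trivial cellular extension $\PP_A(0)\to C$ followed by a fibration $C\to B_\al$; since $\PP_A(0)\to B_\al$ is a trivial cofibration, the retract argument exhibits it as a retract of $\PP_A(0)\to C$ in the category of $\PP_A$-algebras under $\PP_A(0)$. Applying Lemma~\ref{basic} to the $\Sg$-cofibrant operad $\PP_A$ and the trivial cellular extension $\PP_A(0)\to C$ shows that $\PP_A\to(\PP_A)_C$ is a trivial $\Sg$-cofibration of operads. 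By functoriality of the enveloping operad, $\eta_{B_\al}:\PP_A\to(\PP_A)_{B_\al}$ is a retract of $\PP_A\to(\PP_A)_C$ over $\PP_A$; as trivial $\Sg$-cofibrations are closed under retracts, $\eta_{B_\al}$ is itself a trivial $\Sg$-cofibration, in particular a weak equivalence. Through the isomorphism of Lemma~\ref{Lemma2} this gives that $\PP_\al$ is a weak equivalence, completing the reduction.

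Finally, restricting to unary operations yields that $\Env_\PP(A)=\PP_A(1)\to\PP_B(1)=\Env_\PP(B)$ is a weak equivalence in $\EE$, and both monoids are well-pointed by Proposition~\ref{cofibrantextension}; this proves the last assertion. The main obstacle is the bookkeeping of the identifications in the middle step: matching $\PP_\al$ with the unit map $\eta_{B_\al}$ through Lemma~\ref{Lemma2}, and matching the transferred and coslice model structures so that the trivial-cofibration hypothesis is carried from $\Alg_\PP$ to $\Alg_{\PP_A}$. Once these are in place, Ken Brown's lemma, the retract argument, and Lemma~\ref{basic} do the rest.
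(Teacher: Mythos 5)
Your proof is correct and follows essentially the same route as the paper's: K.~Brown's Lemma to reduce to the (trivial) cellular case, the identification $\PP_B\cong(\PP_A)_{B_\al}$ from Lemma~\ref{Lemma2}, admissibility and $\Sg$-cofibrancy of $\PP_A$ from Lemma~\ref{basic0} and Proposition~\ref{cofibrantextension}, and finally Lemma~\ref{basic} applied to the operad $\PP_A$. The paper's proof is simply a compressed version of yours; the bookkeeping you carry out explicitly (matching $\PP_\al$ with $\eta_{B_\al}$, identifying the transferred and coslice model structures, and the retract argument replacing a trivial cofibration by a trivial cellular extension) is exactly what the paper leaves implicit.
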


\begin{proof}By K. Brown's Lemma, it suffices to consider the case of a trivial cellular extension $\al$. The statement then follows from Lemmas \ref{Lemma2}, \ref{basic0} and \ref{basic}, since $\PP_B$ may be identified with $(\PP_A)_{B_\al}$, and $\PP_A$ is an admissible $\Sg$-cofibrant operad.\end{proof}

\begin{rmk}The homotopical properties of the enveloping operad construction $(\PP,A)\mapsto \PP_A$, as expressed by Lemma \ref{basic} and Corollary \ref{invariance}, are the main technical ingredients in establishing the homotopy invariance of the derived category of an algebra over an operad. Beno\^\i t Fresse pointed out to us that in recent work \cite{F2}, he independently obtained similar homotopical properties of the assignment $(\RR,A)\mapsto\RR\circ_{\PP}A$ where $\RR$ is a $\Sg$-cofibrant right $\PP$-module and $A$ is a cofibrant $\PP$-algebra. Since the enveloping operad $\PP_A$ may be identified with $\RR\circ_\PP A$ for a certain right $\PP$-module $\RR$, see \cite[Section 10]{F2}, Lemma \ref{basic} and Corollary \ref{invariance} may be recovered from \cite[Lemma 13.1.B]{F2} and \cite[Theorem 13.A.2]{F2}. The more general context of right $\PP$-modules however makes the proofs of these statements more involved than those of our results which are immediate consequences of \cite[Section 5]{BM}.\end{rmk}

\begin{thm}\label{changeofalgebra}Let $\PP$ be an admissible $\Sg$-cofibrant operad in a cofibrantly generated monoidal model category. For any cofibrant $\PP$-algebra $A$, the category $\Mod_\PP(A)$ of $A$-modules carries a transferred model structure. 

Any map of cofibrant $\PP$-algebras $f:A\to B$ induces a Quillen adjunction $f_!:\Mod_\PP(A)\to\Mod_\PP(B):f^*$. If $f$ is a weak equivalence, then $(f_!,f^*)$ is a Quillen equivalence. \end{thm}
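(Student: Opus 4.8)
The plan is to reduce the entire statement to the homotopy theory of modules over a single well-pointed monoid, namely the enveloping algebra $R_A=\Env_\PP(A)=\PP_A(1)$, and then to invoke the standard theory of such module categories. Throughout I write $R_A=\Env_\PP(A)$ and use Theorem \ref{envelope} to identify $\Mod_\PP(A)$ with the category $\Mod_{R_A}$ of left $R_A$-modules in $\EE$; under this identification the free-forgetful adjunction becomes $R_A\otimes(-)\colon\EE\lra\Mod_{R_A}$, and (as already observed in the proof of Theorem \ref{envelope}) the forgetful functor $U_A$ creates all colimits and lifts tensors and cotensors.

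For the existence of the transferred model structure I would verify the two hypotheses of the transfer principle along $R_A\otimes(-)$. Smallness of the domains of the generating (trivial) cofibrations is inherited from the cofibrant generation of $\EE$, since $U_A$ creates filtered colimits. The substantive point is that every relative cell complex built from the free maps $R_A\otimes j$, with $j$ a generating trivial cofibration, is a weak equivalence. Here I would use that $R_A$ is well-pointed by Proposition \ref{cofibrantextension}: the pushout-product axiom applied to the cofibration $I\to R_A$ against $j$ shows that free extensions along $j$ are weak equivalences in $\EE$, and weak equivalences are closed under the pushouts and transfinite compositions occurring in a cell complex. This produces the model structure, with weak equivalences and fibrations created by $U_A$.

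For the adjunction, a map $f\colon A\to B$ of $\PP$-algebras induces, by functoriality of the enveloping algebra (Definition \ref{envalgebra}), a map of monoids $R_A\to R_B$, and hence the usual extension-restriction adjunction with $f_!=R_B\otimes_{R_A}(-)$ and $f^*$ the restriction of scalars. Since weak equivalences and fibrations in both module categories are detected on underlying objects in $\EE$, and $f^*$ is the identity on underlying objects, $f^*$ preserves fibrations and trivial fibrations, so $(f_!,f^*)$ is a Quillen adjunction. When $f$ is a weak equivalence of cofibrant $\PP$-algebras, Corollary \ref{invariance} tells us that $R_A\to R_B$ is a weak equivalence of well-pointed monoids. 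To upgrade the adjunction to a Quillen equivalence I would check that for every cofibrant $R_A$-module $M$ the unit $M\to f^*f_!M=R_B\otimes_{R_A}M$ is a weak equivalence; since $f^*$ reflects weak equivalences, this suffices. The claim holds for free modules $M=R_A\otimes X$ on cofibrant $X$, where it reduces to $R_A\otimes X\to R_B\otimes X$ being a weak equivalence, and then propagates to all cofibrant modules by the cellular induction furnished by the transferred structure.

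The main obstacle, in both the acyclicity step and the unit-map step, is the interplay between well-pointedness and a possibly non-cofibrant unit $I$. When $I$ is cofibrant, well-pointedness makes $R_A$ cofibrant, $R_A\otimes(-)$ preserves trivial cofibrations outright, and $R_A\otimes X\to R_B\otimes X$ is a weak equivalence by K. Brown's lemma; the rest is then routine. Without that assumption one must argue with the pushout-product of $I\to R_A$ (rather than of the map from the initial object) against the generating trivial cofibrations, and control the resulting filtration, which is exactly where the unit axiom of the monoidal model category $\EE$ enters. I expect this to be the only genuinely delicate part; everything else is a formal consequence of Theorem \ref{envelope}, Proposition \ref{cofibrantextension} and Corollary \ref{invariance}.
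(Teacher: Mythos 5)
Your proposal is correct and takes essentially the same route as the paper: the paper's own proof is exactly this reduction, citing Theorem \ref{envelope}, Proposition \ref{cofibrantextension} and Corollary \ref{invariance} together with the monoid-level statements packaged as Proposition \ref{changeofmonoid}, whose proof (transfer via the pushout-product axiom and well-pointedness; Quillen equivalence via the unit criterion, cellular reduction to free modules $M\otimes C$, and K.~Brown's Lemma applied to $(-)\otimes C\colon I\slash\EE\to C\slash\EE$) is what you reconstruct inline. One word of caution on phrasing: the pushout-product argument shows that $R_A\otimes j$ is a trivial \emph{cofibration} in $\EE$, and it is trivial cofibrations --- not weak equivalences, which are closed under neither operation --- that pass through the pushouts and transfinite compositions of the cell-complex induction.
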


\begin{proof}By Proposition \ref{cofibrantextension}, the enveloping algebra of a cofibrant $\PP$-algebra is well-pointed. Theorem \ref{envelope}, Corollary \ref{invariance} and Proposition \ref{changeofmonoid} then yield the conclusion.\end{proof}

\begin{prp}\label{changeofmonoid}Let $\EE$ be a cofibrantly generated monoidal model category.
\begin{itemize}\item[(a)]For any well-pointed monoid $M$, the category $\Mod_M$ of $M$-modules carries a transferred model structure; if in addition $M$ has a compatible cocommutative comonoid structure, then $\Mod_M$ is a monoidal model category;\item[(b)]Each map of well-pointed monoids $f:M\to N$ induces a Quillen adjunction $f_!:\Mod_M\lra\Mod_N:f^*$; if $f$ is a weak equivalence, then $(f_!,f^*)$ is a Quillen equivalence.\end{itemize}
\end{prp}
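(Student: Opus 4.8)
I would realise $\Mod_M$ as the category of algebras for the monad $M\otimes(-)$ on $\EE$, so that the forgetful functor $U\colon\Mod_M\to\EE$ has left adjoint $\FF_M=M\otimes(-)$ and creates all colimits. For the transferred model structure in (a) I would apply the standard (Kan) transfer theorem along $\FF_M\dashv U$: since $\EE$ is cofibrantly generated the domains of the candidate generating cofibrations $M\otimes i$ are small and $U$ preserves filtered colimits, so the only thing to check is the acyclicity condition, namely that every relative cell complex built from the maps $M\otimes j$ (with $j$ a generating trivial cofibration) is a weak equivalence. The key lemma is that a \emph{well-pointed} $M$ makes $M\otimes(-)$ preserve cofibrations and trivial cofibrations: the map $M\otimes j\colon M\otimes K\to M\otimes L$ factors as $M\otimes K\to (M\otimes K)\cup_K L\to M\otimes L$, whose first map is a pushout of $j$ and whose second map is the pushout--product $(I\to M)\,\square\,j$, so both are (trivial) cofibrations by the pushout--product axiom together with the hypothesis that $I\to M$ is a cofibration. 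Because $U$ creates colimits, pushouts and transfinite composites of the $M\otimes j$ have underlying trivial cofibrations, the acyclicity condition follows, and the transferred cofibrantly generated model structure exists.

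For (b), the restriction functor $f^*$ satisfies $U_M f^*=U_N$; since weak equivalences and fibrations in both module categories are created by the forgetful functors, $f^*$ preserves and reflects them, so $(f_!,f^*)$ is a Quillen adjunction. When $f$ is a weak equivalence, the fact that $f^*$ reflects all weak equivalences reduces the Quillen equivalence to showing that the unit $P\to f^*f_!P$ is a weak equivalence for every cofibrant $P$; its underlying map is $P\cong M\otimes_M P\to N\otimes_M P$. For a free module $P=M\otimes X$ this is $f\otimes X\colon M\otimes X\to N\otimes X$, which I would prove to be a weak equivalence for cofibrant $X$ by regarding $-\otimes X$ as a left Quillen functor $I/\EE\to X/\EE$ between coslice model categories (it preserves cofibrations because $g\otimes X=g\,\square\,(\emptyset\to X)$): as $(I\to M)$ and $(I\to N)$ are cofibrant in $I/\EE$ and $f$ is a weak equivalence between them, Ken Brown's lemma gives the claim. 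A standard cellular induction using the gluing lemma then propagates this to an arbitrary cofibrant $P$.

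For the monoidal statement in (a) the relevant structure on $\Mod_M$ is $P\otimes_\EE Q$ with the diagonal action along the cocommutative comultiplication $\Delta\colon M\to M\otimes M$ and unit $I$ with the counit action; cocommutativity makes this symmetric, $U$ strong symmetric monoidal, and $\Mod_M$ is closed by the adjoint functor theorem. By cofibrant generation it suffices to verify the pushout--product axiom on free cells, and the useful identity is $(M\otimes X)\otimes_M(M\otimes Y)\cong\Delta^*\FF_{M\otimes M}(X\otimes Y)$, under which $(M\otimes i)\,\square\,(M\otimes i')\cong\Delta^*\FF_{M\otimes M}(i\,\square_\EE\,i')$, where $\Delta^*$ is restriction along the monoid map $\Delta$. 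The triviality half is then clean: the underlying map is $(M\otimes M)\otimes(i\,\square_\EE\,i')$, and since $M\otimes M$ is again well-pointed --- the composite $I\to M\to M\otimes M$, whose second map is $M\otimes(I\to M)$, is a cofibration by the key lemma of (a) --- applying that key lemma to $M\otimes M$ makes this a weak equivalence as soon as $i$ or $i'$ is trivial. The main obstacle I foresee is the \emph{structural} half of the pushout--product axiom, i.e. that $\Delta^*\FF_{M\otimes M}(i\,\square_\EE\,i')$ is a \emph{cofibration} in $\Mod_M$. Writing it as $\Delta^*(M\otimes M)\otimes_\EE(i\,\square_\EE\,i')$ and using that the $\EE$-tensoring on $\Mod_M$ satisfies its own pushout--product axiom (checked on free cells, where it reduces to that of $\EE$), this comes down to the cofibrancy of the single $M$-module $\Delta^*(M\otimes M)$, the free rank-one $(M\otimes M)$-module restricted along $\Delta$. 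This is the crux, and it is delicate precisely because, in the absence of an antipode, $\Delta^*(M\otimes M)$ is not a free $M$-module, and because cofibrancy in $\Mod_M$ is not detected by the underlying object in $\EE$ when $I$ fails to be cofibrant; it is here that the cocommutative comonoid structure and the well-pointedness of $M$ must be combined. The remaining unit axiom for $\Mod_M$ --- a cofibrant resolution of the tensor unit $I$ (with its counit action) along which tensoring with cofibrant modules preserves weak equivalences --- I expect to follow from the unit axiom of $\EE$ and the same well-pointedness estimates once the pushout--product axiom is established.
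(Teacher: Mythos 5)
Your treatment of the two parts you actually complete --- the transferred model structure in (a) and all of part (b) --- is correct and essentially coincides with the paper's own proof. The paper likewise obtains the transfer from the fact that $M\otimes(-)$ preserves colimits, cofibrations and trivial cofibrations (your factorization of $M\otimes j$ through the pushout-product with $I\to M$ is exactly what its appeal to ``the pushout-product axiom and the well-pointedness of $M$'' abbreviates), and for (b) it makes the same two moves: reduce by cellular induction (Reedy's patching and telescope lemmas) to free modules $M\otimes C$ with $C$ cofibrant, where the unit becomes $f\otimes id_C$, and then apply K.~Brown's Lemma to $(-)\otimes C\colon I/\EE\to C/\EE$.

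The monoidal half of (a) is another matter. The ``crux'' you isolate --- cofibrancy in $\Mod_M$ of $\Delta^*(M\otimes M)$, equivalently the structural half of the pushout-product axiom --- is not merely delicate: it is false in this generality, so no argument can close it. Take $\EE=\Ch(k)$, unbounded chain complexes over a field with the projective model structure, let $\Gamma=\{1,a,a^2\}$ be the three-element monoid with $a^3=a^2$, and let $M=k[\Gamma]$ with the grouplike comultiplication $\Delta(\gamma)=\gamma\otimes\gamma$. This $M$ is well-pointed (over a field every complex is cofibrant and every monomorphism is a cofibration) and carries a compatible cocommutative comonoid structure. Now $M\cong k\times k[\epsilon]/(\epsilon^2)$, so a finite-dimensional \emph{projective} $M$-module has the form $k^r\oplus k[\epsilon]^p$, on which $a^2$ acts with rank $r$ and $a$ with rank $r+p$; hence its dimension equals $2\,\mathrm{rank}(a)-\mathrm{rank}(a^2)$. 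But on $N=\Delta^*(M\otimes M)$ the diagonal action sends the $9$-element basis $\Gamma\times\Gamma$ onto $\{a,a^2\}\times\{a,a^2\}$ under $a$ and onto $\{(a^2,a^2)\}$ under $a^2$, so $\mathrm{rank}(a|_N)=4$ and $\mathrm{rank}(a^2|_N)=1$, while $\dim N=9\neq 2\cdot 4-1$. Thus $N$ is not projective, hence not cofibrant in the transferred structure (a cofibrant dg-module concentrated in degree $0$ is projective: lift against the evident two-term trivial fibrations). Yet $0\to N$ is the pushout-product $(0\to M)\,\square\,(0\to M)$ of two cofibrations of $\Mod_M$, since $M=\FF_M(k)$ is a cofibrant module. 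So the pushout-product axiom fails for this well-pointed cocommutative bimonoid.

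You should also know how this compares with the paper: its proof of the monoidal half is the one-line observation that $U$ is strictly monoidal and preserves and reflects limits, fibrations and weak equivalences. That observation yields exactly your ``triviality half'' (the underlying map of $g\,\square\,g'$ is a (trivial) cofibration of $\EE$) and silently skips the question you raise, namely whether $g\,\square\,g'$ is a cofibration \emph{in} $\Mod_M$. So your analysis has in fact located a genuine gap in the paper itself, and by the example above the statement needs an extra hypothesis. The hypothesis that repairs it is an antipode: for a Hopf monoid, $m\otimes n\mapsto m_{(1)}\otimes S(m_{(2)})n$ defines an isomorphism from $\Delta^*(M\otimes M)$ to the free module on the underlying object of $M$, natural enough to identify $(M\otimes i)\,\square\,(M\otimes i')$ with $\FF_M\bigl(M\otimes(i\,\square\,i')\bigr)$, which is a cofibration of $\Mod_M$ by your key lemma; equivalently, with an antipode $U$ preserves internal homs (conjugation action), so the adjoint form of the pushout-product axiom descends from $\EE$. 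None of the later results of the paper use the monoidal half of (a), so the gap does not propagate.
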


\begin{proof}The first part of (a) follows by a transfer argument (cf. \cite[Section 2.5]{BM}) from the fact that tensoring with $M$ preserves colimits, as well as cofibrations and trivial cofibrations; the preservation of cofibrations and trivial cofibrations follows from the pushout-product axiom and the well-pointedness of $M$. If in addition $M$ has a compatible comonoid structure, then $\Mod_M$ carries a closed monoidal structure which is strictly preserved by the forgetful functor $\Mod_M\to\EE$. Since the forgetful functor preserves and reflects limits as well as fibrations and weak equivalences, this implies that $\Mod_M$ satisfies the axioms of a monoidal model category.

The first statement of (b) follows, since $f^*$ preserves fibrations and trivial fibrations by definition of the transferred model structures on $\Mod_M$ resp. $\Mod_N$. For the second statement of (b), we use that $(f_!,f^*)$ is a Quillen equivalence if and only if the unit $\eta_X:X\to f^*f_!(X)$ is a weak equivalence at each cofibrant $M$-module $X$. Assume that $f$ is a weak equivalence; since any cofibrant $M$-module is retract of a ``cellular extension'' of the initial $M$-module, Reedy's patching and telescope lemmas (cf. \cite[Section 2.3]{BM2}) imply that it is sufficient to consider $M$-modules of the form  $M\otimes C$, where $C$ is a cofibrant object of $\EE$. In this case, the unit $\eta_{M\otimes C}$ may be identified with $f\otimes id_C:M\otimes C\to N\otimes C$; the latter is a weak equivalence by an application of the pushout-product axiom and of K. Brown's Lemma to the functor $(-)\otimes C:I\slash\EE\to C\slash\EE$.\end{proof} 


\begin{rmk}\label{cofibrant}It follows from Proposition \ref{changeofmonoid} (b) that for each weak equivalence $f:M\to N$ of well-pointed monoids, the unit $\eta_X:X\to f^*f_!(X)$ is a weak equivalence at cofibrant $M$-modules $X$. For later use, we observe that this holds also if $X$ is only cofibrant as an object of $\EE$ provided $N$ is cofibrant as an $M$-module.\end{rmk}

\begin{rmk}If $\EE$ satisfies the \emph{monoid axiom} of Schwede and Shipley \cite{SS}, the category of $M$-modules carries a transferred model structure for \emph{any} monoid $M$ in $\EE$. The monoid axiom holds in many interesting situations, in particular if either all objects of $\EE$ are cofibrant, or all objects of $\EE$ are fibrant. However, even if the monoid axiom holds, Proposition \ref{changeofmonoid} (b) does \emph{not} carry over to a base-change along arbitrary monoids; indeed, the unit of the base-change adjunction behaves in general badly at cofibrant $M$-modules if $M$ is not supposed to be well-pointed.\end{rmk}

In general it is more restrictive for $f:A\to B$ to be a weak equivalence than to induce a Quillen equivalence. A complete characterisation of those $f$ which induce a Quillen equivalence on module categories would require a homotopical Morita theory. We give here, in a particular case, a precise criterion for when a Quillen equivalence between module categories comes from a weak equivalence.

\begin{prp}Under the hypotheses of \ref{changeofalgebra}, let $f:A\to B$ be a map of cofibrant $\PP$-algebras. Assume that either $A$ or the enveloping algebra of $B$ is cofibrant as an $A$-module. Then $f$ is a weak equivalence if and only if $(f_!,f^*)$ is a Quillen equivalence and the induced map of $B$-modules $f_!(A)\to B$ is a weak equivalence.\end{prp}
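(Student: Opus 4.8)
The plan is to transport the whole statement into the module categories of the enveloping algebras and then to reduce it to the behaviour of a single unit map. Write $M=\Env_\PP(A)$ and $N=\Env_\PP(B)$, and let $g:M\to N$ be the induced map of monoids. By Theorem \ref{envelope} (and the functoriality of $\Env$) the adjunction $(f_!,f^*)$ is identified with the base-change adjunction $(N\otimes_M-,\,g^*):\Mod_M\lra\Mod_N$. Under this identification the $A$-module $A$ is $\PP_A(0)\cong A$ (Lemma \ref{Lemma1}) viewed as an $M$-module, the $B$-module $B$ is $\PP_B(0)$ as an $N$-module, and the proposition's comparison $\eps:f_!(A)\to B$ is the transpose of the canonical map $\bar f:A\to f^*B$ of $A$-modules, whose underlying map in $\EE$ is $f$. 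Thus $\bar f=f^*(\eps)\circ\eta_A$, where $\eta$ is the unit of $(f_!,f^*)$. Since $g^*$ leaves underlying objects unchanged, $f^*$ both preserves and reflects weak equivalences; in particular $f$ is a weak equivalence iff $\bar f$ is, and, once $\eps$ is known to be a weak equivalence, this is equivalent to $\eta_A$ being a weak equivalence.

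For the forward implication, assume $f$ is a weak equivalence. Corollary \ref{invariance} then makes $g$ a weak equivalence of well-pointed monoids, so Theorem \ref{changeofalgebra} (equivalently Proposition \ref{changeofmonoid}(b)) gives that $(f_!,f^*)$ is a Quillen equivalence. Because $A$ is a cofibrant $\PP$-algebra over a $\Sg$-cofibrant operad, its underlying object is cofibrant in $\EE$; hence whichever of the two hypotheses holds --- $A$ cofibrant as an $A$-module, or $N$ cofibrant as an $A$-module --- places us in one of the two cases of Remark \ref{cofibrant}, and I conclude that $\eta_A$ is a weak equivalence. As $\bar f$ (underlying map $f$) is a weak equivalence, the factorisation $\bar f=f^*(\eps)\circ\eta_A$ together with two-out-of-three forces $f^*(\eps)$, and hence $\eps$, to be a weak equivalence.

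For the converse, suppose $(f_!,f^*)$ is a Quillen equivalence and $\eps$ is a weak equivalence; by the reduction of the first paragraph it suffices to show $\eta_A$ is a weak equivalence. The derived unit of a Quillen equivalence is a weak equivalence at every cofibrant object, and since $f^*$ preserves all weak equivalences the derived unit $X\to f^*R(f_!X)$ differs from $\eta_X$ only by the weak equivalence $f^*(f_!X\to R(f_!X))$; hence $\eta_X$ is a weak equivalence for every cofibrant $M$-module $X$. If $A$ is cofibrant as an $A$-module this gives $\eta_A$ at once. If instead $N$ is cofibrant as an $M$-module, I would pick a cofibrant resolution $q:cA\eqv A$ in $\Mod_M$; then $\eta_{cA}$ is a weak equivalence, and by naturality of $\eta$ it remains to see that $f_!(q)=N\otimes_M q$ is a weak equivalence. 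Here both $cA$ and $A$ have underlying objects cofibrant in $\EE$, so tensoring a cellular model of the cofibrant $M$-module $N$ with $q$ reduces --- exactly as in the proof of Proposition \ref{changeofmonoid}(b), through the pushout-product axiom, K. Brown's Lemma and Reedy's patching and telescope lemmas --- to the weak equivalences $C\otimes cA\eqv C\otimes A$ for cofibrant objects $C$. Hence $f_!(q)$, and therefore $\eta_A$ and finally $f$, is a weak equivalence.

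The step I expect to be the crux is this last case of the converse: when $A$ is not itself a cofibrant module one cannot read off $\eta_A$ from the Quillen equivalence directly, and transporting the cofibrant resolution $cA\to A$ across $f_!$ relies precisely on the homotopical flatness of $N\otimes_M-$ supplied by the cofibrancy of $N$ over $M$ (together with the cofibrancy in $\EE$ of $cA$ and $A$). A more routine but essential point is the bookkeeping of the first paragraph: identifying the map $f_!(A)\to B$ with the transpose of $\bar f:A\to f^*B$ and checking that the underlying map of $\bar f$ is $f$, since every two-out-of-three argument above depends on it.
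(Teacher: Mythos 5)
Your proof is correct and follows essentially the same route as the paper's: the same identification of module categories via $\Env_\PP$, the same factorisation of $f$ as $A\to f^*f_!(A)\to f^*(B)$, the same appeal to Corollary \ref{invariance}, Theorem \ref{changeofalgebra} and the cofibrancy in $\EE$ of the underlying object of $A$ in the forward direction, and the same two cases (cofibrancy of $A$, or of $\Env_\PP(B)$, as an $A$-module) throughout. The one place you diverge is the converse, where the paper simply cites Remark \ref{cofibrant} --- whose stated hypothesis, that the map of enveloping algebras is a weak equivalence, is precisely what is \emph{not} available at that point --- whereas you re-derive its conclusion from the Quillen-equivalence hypothesis alone: case (a) from the derived unit (noting $f^*$ preserves all weak equivalences), and case (b) by resolving $A$ in $\Mod_\PP(A)$ and using the homotopical flatness of the cofibrant module $\Env_\PP(B)$ via the cellular induction of Proposition \ref{changeofmonoid}(b); this unfolding is exactly what is needed to make the paper's terse citation rigorous, and is the stronger point of your write-up.
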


\begin{proof}The given $f$ can be considered as a map of $A$-modules $A\to f^*(B)$, and as such it factors through the unit of the adjunction: $A\to f^*f_!(A)\to f^*(B)$. It follows from \cite[Corollary 5.5]{BM} that $A$ has a cofibrant underlying object, and it follows from Proposition \ref{cofibrantextension} that the enveloping algebras of $A$ and of $B$ are well-pointed.

Assume first that $f$ is a weak equivalence. Then, by Theorem \ref{changeofalgebra}, $(f_!,f^*)$ is a Quillen equivalence, and by Remark \ref{cofibrant}, either of the hypotheses implies that the unit $A\to f^*f_!(A)$ is a weak equivalence, whence $f_!(A)\to B$ is a weak equivalence.

Conversely, assume that $(f_!,f^*)$ is a Quillen equivalence, and that $f_!(A)\to B$, and hence $f^*f_!(A)\to f^*(B)$, is a weak equivalence; then, by Remark \ref{cofibrant}, either of the hypotheses implies that $A\to f^*f_!(A)$ is a weak equivalence, whence $f$ is a weak equivalence.\end{proof}
 
\begin{dfn}Let $\PP$ be an admissible $\Sg$-cofibrant operad. The \emph{derived category} $\DD_\PP(A)$ of a $\PP$-algebra $A$ is the homotopy category of the category of $cA$-modules under $\PP$,  where $cA$ is any cofibrant resolution of $A$ in the category of $\PP$-algebras.\end{dfn}

Up to adjoint equivalence, this definition does not depend on the choice of the cofibrant resolution, by Theorem \ref{changeofalgebra}. Furthermore, any map $f:A\to B$ of $\PP$-algebras induces an adjunction $f_!:\DD_\PP(A)\lra\DD_\PP(B):f^*$ since the cofibrant resolutions can be chosen functorially. For weak equivalences $f$, this adjunction is an adjoint equivalence.

We shall now discuss the functorial behaviour of the derived category under change of operads $\phi:\PP\to\QQ$. Recall that $\phi$ induces an adjunction $$\phi_!:\Alg_\PP\lra\Alg_\QQ:\phi^*$$ which is a Quillen adjunction for admissible operads $\PP,\QQ$. In particular, for any $\PP$-algebra $A$, $\phi$ induces a map of enveloping operads $\PP_A\to\QQ_{\phi_!(A)}$, and hence a Quillen adjunction $\Mod_\PP(A)\lra\Mod_\QQ(\phi_!A)$. The derived adjunction of the Quillen pair $(\phi_!,\phi^*)$ is denoted by $$L\phi_!:\Ho(\Alg_\PP)\lra\Ho(\Alg_\QQ):R\phi^*.$$

\begin{thm}\label{comparison1}Let $\EE$ be a left proper, cofibrantly generated monoidal model category and $\phi:\PP\to\QQ$ be a weak equivalence of admissible $\Sg$-cofibrant operads. Then, 

\begin{itemize}\item[(a)]for a cofibrant $\PP$-algebra $A$, the Quillen adjunction $\Mod_\PP(A)\lra\Mod_\QQ(\phi_!A)$ is a Quillen equivalence;\item[(b)]for an arbitrary $\PP$-algebra $A$, the map $\phi$ induces an equivalence of derived categories $\DD_\PP(A)\simeq\DD_\QQ(L\phi_!(A))$;\item[(c)]for an arbitrary $Q$-algebra $B$, $\phi$ induces an equivalence of derived categories $\DD_\PP(R\phi^*(B))\simeq\DD_\QQ(B)$.\end{itemize}\end{thm}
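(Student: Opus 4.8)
The plan is to prove (a) first, to read off (b) at once, and to deduce (c) from (b) together with the fact that $(\phi_!,\phi^*)$ is a Quillen equivalence between the algebra categories. For (a), by Theorem \ref{envelope} the category $\Mod_\PP(A)$ is the module category of $\Env_\PP(A)=\PP_A(1)$ and $\Mod_\QQ(\phi_!A)$ that of $\Env_\QQ(\phi_!A)=\QQ_{\phi_!A}(1)$, and the Quillen adjunction in question is base change along the monoid map $\PP_A(1)\to\QQ_{\phi_!A}(1)$ induced by $\phi$. Since $\phi_!$ is left Quillen, $\phi_!A$ is a cofibrant $\QQ$-algebra, so Proposition \ref{cofibrantextension} shows that both enveloping algebras are well-pointed. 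By Proposition \ref{changeofmonoid}(b) it therefore suffices to prove the key claim that $\PP_A\to\QQ_{\phi_!A}$ is a weak equivalence of operads (hence in particular in arity $1$). I would establish this cell by cell, using that a cofibrant $\PP$-algebra is a retract of a cellular one and that $A\mapsto\PP_A$ is a left adjoint, hence preserves the colimits defining the cellular structure.

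For a free $\PP$-algebra $A=\FF_\PP(X)$ on a cofibrant object $X$ one has $\phi_!A=\FF_\QQ(X)$, and by the Getzler--Jones formula \cite{GJ} the comparison map is the coproduct of the maps
$$\PP(n+k)\otimes_{\Sg_k}X^{\otimes k}\lrto\QQ(n+k)\otimes_{\Sg_k}X^{\otimes k}.$$
Since $\phi$ is a weak equivalence of $\Sg$-cofibrant operads, each $\phi(n+k)$ is a weak equivalence of $\Sg_{n+k}$-cofibrant, hence $\Sg_k$-cofibrant, objects; as $X^{\otimes k}$ is cofibrant, the functor $(-)\otimes_{\Sg_k}X^{\otimes k}$ carries it to a weak equivalence by K.~Brown's Lemma. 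The base case $A=\PP(0)$ reduces to $\phi$ itself. For the attachment of a cell I would apply $\PP_{(-)}$ and $\QQ_{(-)}$ to the defining pushout of $\PP$-algebras, obtaining a map of pushout squares of operads that is a weak equivalence on the two free corners (by the free case, noting that $U_\PP(A_i)$ is cofibrant by \cite[Corollary 5.5]{BM}) and on the remaining corner (by the induction hypothesis). \emph{The main obstacle is precisely this amalgamation step:} pushouts of operads are not computed on underlying collections, so one must combine the explicit cellular analysis of $\PP\to\PP_A$ from \cite[Section 5]{BM}, which exhibits it as a $\Sg$-cofibration with a computable filtration, with the gluing lemma in the \emph{left proper} category of collections to conclude that the induced map of pushouts is a weak equivalence. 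Since each structure map $\PP_{A_i}\to\PP_{A_{i+1}}$ is a $\Sg$-cofibration by Lemma \ref{basic}, the transfinite colimit of the resulting ladder of weak equivalences is a weak equivalence, and retracts settle the general cofibrant case.

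For (b), write $cA$ for a cofibrant resolution of $A$, so $\DD_\PP(A)=\Ho(\Mod_\PP(cA))$. The cofibrant $\QQ$-algebra $\phi_!(cA)$ represents $L\phi_!(A)$, whence $\DD_\QQ(L\phi_!A)=\Ho(\Mod_\QQ(\phi_!cA))$. Applying (a) to the cofibrant $\PP$-algebra $cA$ yields a Quillen equivalence $\Mod_\PP(cA)\lra\Mod_\QQ(\phi_!cA)$, and passing to homotopy categories gives $\DD_\PP(A)\simeq\DD_\QQ(L\phi_!A)$.

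For (c), choose a cofibrant resolution $cB\to B$ in $\Alg_\QQ$ and put $A=\phi^*(cB)$. Because $\phi^*$ preserves all weak equivalences, $A$ represents $R\phi^*(B)$, so $\DD_\PP(R\phi^*B)=\DD_\PP(A)\simeq\DD_\QQ(L\phi_!A)$ by (b). Finally, since $\phi$ is a weak equivalence of admissible $\Sg$-cofibrant operads, $(\phi_!,\phi^*)$ is a Quillen equivalence on algebra categories (cf.\ \cite{BM2}), so the derived counit $L\phi_!R\phi^*(B)\to B$ is an isomorphism in $\Ho(\Alg_\QQ)$; as $A$ represents $R\phi^*(B)$, this gives $L\phi_!A\simeq B$, and hence $\DD_\QQ(L\phi_!A)\simeq\DD_\QQ(B)$ by the homotopy invariance of $\DD_\QQ$ (Theorem \ref{changeofalgebra}).
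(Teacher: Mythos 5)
Your proposal is correct and, at its core, follows the same route as the paper: both reduce part (a), via Theorem \ref{envelope}, Proposition \ref{cofibrantextension} and Proposition \ref{changeofmonoid}(b), to the single claim that $\PP_A\to\QQ_{\phi_!A}$ is a weak equivalence of (admissible $\Sg$-cofibrant) operads, and both obtain (b) by applying (a) to a cofibrant resolution. The difference is one of economy versus self-containedness. For the key claim, the paper simply cites \cite[Theorem 4.4]{BM} together with the proof of \cite[Proposition 5.7]{BM}; your cellular induction --- the free case via the Getzler--Jones formula and left Quillen functors $(-)\otimes_{\Sg_k}X^{\otimes k}$, the cell-attachment step via the filtration of operadic pushouts from \cite[Section 5]{BM} combined with the gluing and telescope lemmas in the left proper category of collections --- is precisely the argument given there, so you have in effect inlined the cited proof, and the ``main obstacle'' you flag (pushouts of operads are not computed on underlying collections) is exactly the point that the left properness hypothesis and the Section 5 filtration are designed to handle. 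For (c), your route is a mild variant of the paper's: you take a cofibrant resolution $cB$ and use that $\phi^*$ preserves all weak equivalences (being the identity on underlying objects) to represent $R\phi^*(B)$ by $\phi^*(cB)$, then invoke the derived counit of the algebra-level Quillen equivalence abstractly; the paper instead takes a fibrant resolution $fB$, represents $R\phi^*(B)$ by $\phi^*(fB)$, and exhibits the weak equivalence $\phi_!c\phi^*(fB)\to fB$ explicitly before chaining the equivalences of derived categories. These are the same argument in different clothing. One small correction: the algebra-level Quillen equivalence you appeal to in (c) is \cite[Theorem 4.4]{BM}, not a result of \cite{BM2}.
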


\begin{proof}Part (a) is a special case of \cite[Theorem 4.4]{BM}. More precisely, since $\phi$ is a weak equivalence, the canonical map of operads $\PP_{A}\to\QQ_{\phi_!(A)}$ is a weak equivalence of admissible $\Sg$-cofibrant operads, cf. the proof of \cite[Proposition 5.7]{BM}. Therefore, $\Env_\PP(A)\to\Env_\QQ(\phi_!(A))$ is a weak equivalence and $\Mod_\PP(A)\lra\Mod_\QQ(\phi_!(A))$ is a Quillen equivalence by Theorem \ref{changeofalgebra}. 

Part (b) follows from part (a), applied to a cofibrant resolution $cA$ of $A$.

For (c), let $B$ be a $\QQ$-algebra, and let $B\eqv fB$ be a fibrant resolution of $B$ in the category of $\QQ$-algebras. Then, $\DD_Q(B)\simeq\DD_Q(fB)$ as asserted earlier. By part (a), the canonical map $\phi_! c\phi^*(fB)\to fB$ (adjoint to $c\phi^*(fB)\eqv\phi^*(fB)$) is a weak equivalence inducing another equivalence of derived categories. Also, part (b) gives an equivalence $\DD_\PP(\phi^*(fB))\simeq\DD_\QQ(\phi_!c\phi^*(fB))$. Putting all these equivalences together, we get as required$$\DD_\PP(R\phi^*(B))=\DD_\PP(\phi^*(fB))\simeq\DD_\QQ(\phi_!c\phi^*(fB))\simeq\DD_\QQ(fB)\simeq\DD_\QQ(B).$$\end{proof}

\begin{exm}Let $\EE=\Top$ be the category of compactly generated spaces endowed with Quillen's model structure, let $\PP=A_\infty$ be any $\Sg$-cofibrant $A_\infty$-operad, and let $X$ be an $A_\infty$-space. Then, $\DD_{A_\infty}(X)$ is equivalent to the homotopy category $\Ho(\Top/BX)$ of spaces over the classifying space $BX$ of $X$. Indeed, by Theorem \ref{comparison1}, $X$ may be rectified to a monoid $M=\phi_!(cX)$ along the canonical map of operads $\phi:A_\infty\to\Ass$ without changing the derived category : $\DD_{A_\infty}(X)\simeq\DD_\Ass(L\phi_!X)$, and $BX$ may be identified with the usual classifying space $BM$. The Borel construction then yields an equivalence $\DD_\Ass(M)\simeq\Ho(\Top/BM)$. In particular, if $X$ is the loop space of a pointed connected space $Y$, endowed with the $A_\infty$-action by the little intervals operad, we get $\DD_{A_\infty}(\Omega Y)\simeq\Ho(\Top/Y)$.\end{exm}

For the last comparison theorem, recall that a \emph{monoidal Quillen adjunction} $\Phi_!:\FF\lra\EE:\Phi^*$ between monoidal model categories is an adjoint pair which is simultaneously a Quillen adjunction (with respect to the model structures) and a monoidal adjunction (with respect to the closed symmetric monoidal structures). The latter means that the left and right adjoints of the adjunction are lax symmetric monoidal functors, and that unit and counit of the adjunction are symmetric monoidal natural transformations, see \cite[III.20]{MMSS} for details. Observe that if the left adjoint of an adjunction between closed symmetric monoidal categories is strong symmetric monoidal, then the right adjoint carries a natural symmetric monoidal structure for which the adjunction is monoidal.

\begin{thm}\label{comparison2}Let $\Phi_!:\FF\lra\EE:\Phi^*$ be a monoidal Quillen adjunction between cofibrantly generated monoidal model categories such that the induced map on units $\Phi_!(I_\FF)\to I_\EE$ is a cofibration. 

Let $\QQ$ be a $\Sg$-cofibrant admissible operad in $\FF$, and $B$ be a $\QQ$-algebra, and assume that $\Phi_!Q$ is an admissible operad in $\EE$. Then there is a canonical adjunction of derived categories $\DD_\QQ(B)\lra\DD_{\Phi_!\QQ}(L\Phi_!B)$. This adjunction is a Quillen equivalence, whenever $(\Phi_!,\Phi^*)$ is a Quillen equivalence.\end{thm}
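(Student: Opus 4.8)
The plan is to translate the statement into one about modules over the enveloping algebras, and then to factor the desired comparison into a change of ambient category followed by a change of monoid, applying the machinery of Proposition \ref{changeofmonoid} in each case.

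First I would replace $B$ by a cofibrant resolution $cB\to B$ in $\Alg_\QQ$, so that $\DD_\QQ(B)$ is the homotopy category of $\Mod_\QQ(cB)$. The symmetric monoidal functor $\Phi_!$ sends operads to operads and $\QQ$-algebras to $\Phi_!\QQ$-algebras, and the induced functor $\Phi_!\colon\Alg_\QQ\to\Alg_{\Phi_!\QQ}$ is left Quillen: its right adjoint is, on underlying objects, the right Quillen functor $\Phi^*$, and fibrations and trivial fibrations are created underlying by admissibility of $\QQ$ and $\Phi_!\QQ$. Hence $\Phi_!(cB)$ is a cofibrant $\Phi_!\QQ$-algebra representing $L\Phi_!B$, so that $\DD_{\Phi_!\QQ}(L\Phi_!B)$ is the homotopy category of $\Mod_{\Phi_!\QQ}(\Phi_!cB)$. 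The hypothesis that $\Phi_!(I_\FF)\to I_\EE$ is a cofibration is exactly what controls the operadic unit under $\Phi_!$ and thereby guarantees that $\Phi_!\QQ$ is again $\Sg$-cofibrant; being admissible by assumption, Proposition \ref{cofibrantextension} then shows that both enveloping algebras $M:=\Env_\QQ(cB)$ and $\Env_{\Phi_!\QQ}(\Phi_!cB)$ are well-pointed. By Theorem \ref{envelope} we may now replace $\Mod_\QQ(cB)$ by $\Mod_M$ (in $\FF$) and $\Mod_{\Phi_!\QQ}(\Phi_!cB)$ by $\Mod_{\Env_{\Phi_!\QQ}(\Phi_!cB)}$ (in $\EE$).

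Next I would record the compatibility of $\Phi_!$ with the enveloping-algebra construction. Since $\Phi_!$ preserves colimits and is monoidal, applying it to the explicit description of the enveloping operad of a free algebra and to the defining coequalizer (\ref{coeq3}) produces the coequalizer computing $\Env_{\Phi_!\QQ}(\Phi_!cB)$, hence a canonical map of monoids $\kappa\colon\Phi_!M\to\Env_{\Phi_!\QQ}(\Phi_!cB)$; when $\Phi_!$ is strong monoidal (the situation of the Remark preceding the theorem) $\kappa$ is an isomorphism. The comparison of the theorem is then the composite of two adjunctions. The first is the change-of-category adjunction $\Mod_M(\FF)\lra\Mod_{\Phi_!M}(\EE)$ whose left adjoint sends an $M$-module $X$ to $\Phi_!X$ with its canonical $\Phi_!M$-action, and whose right adjoint sends a $\Phi_!M$-module to $\Phi^*$ of it, regarded as an $M$-module by restriction along the monoid unit $M\to\Phi^*\Phi_!M$; adjointness is inherited from $(\Phi_!,\Phi^*)$, and since the right adjoint is $\Phi^*$ on underlying objects it is a Quillen adjunction for the transferred structures, which exist by well-pointedness and Proposition \ref{changeofmonoid}(a). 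The second is the change-of-monoid adjunction along $\kappa$, which is a Quillen adjunction between well-pointed monoids by Proposition \ref{changeofmonoid}(b). This establishes the canonical adjunction of derived categories, proving the first assertion.

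Finally, assume $(\Phi_!,\Phi^*)$ is a Quillen equivalence; I must show both factors are Quillen equivalences. For the change-of-monoid factor this is Proposition \ref{changeofmonoid}(b): it suffices that $\kappa$ be a weak equivalence, which holds automatically when $\Phi_!$ is strong (then $\kappa$ is an isomorphism) and in general follows from the monoidal comparison maps being weak equivalences on cofibrant objects. The substantial point is the change-of-category factor. Following the proof of Proposition \ref{changeofmonoid}(b), I would use that every cofibrant $M$-module is a retract of a cellular extension of the initial module, so that Reedy's patching and telescope lemmas reduce the verification of the derived unit and counit to free modules $M\otimes C$ with $C$ a cofibrant object of $\FF$; on these the derived unit is identified, via the compatibility above, with the canonical map assembled from $\Phi_!(M\otimes C)$ and the monoidal structure map $\Phi_!(M\otimes C)\to\Phi_!M\otimes\Phi_!C$, and the pushout-product axiom together with K. Brown's Lemma reduce the claim to the fact that these monoidal comparison maps are weak equivalences on cofibrant objects --- precisely the content of $(\Phi_!,\Phi^*)$ being a (monoidal) Quillen equivalence. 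I expect this reduction to free modules, and the verification that the monoidal comparison maps are weak equivalences there, to be the main obstacle: it is where the well-pointedness of $M$ (making the cellular induction available) and the full force of the Quillen-equivalence hypothesis are jointly needed.
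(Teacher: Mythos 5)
Your overall route is the same as the paper's: resolve $B$ cofibrantly, identify both derived categories as homotopy categories of modules over enveloping algebras (Theorem \ref{envelope}), prove that $\Phi_!$ is compatible with the enveloping construction, and conclude by a base-change adjunction between module categories. The genuine gap is in the decisive final step, namely that the change-of-category adjunction $\Phi_!:\Mod_\FF(M)\lra\Mod_\EE(\Phi_!M):\Phi^!$ is a Quillen equivalence whenever $(\Phi_!,\Phi^*)$ is. The derived unit at a free module $M\otimes C$ is the map $M\otimes C\to\Phi^!\bigl(R\,\Phi_!(M\otimes C)\bigr)$, where $R$ is a fibrant replacement in $\Mod_\EE(\Phi_!M)$; it cannot be identified with the monoidal structure map $\Phi_!(M\otimes C)\to\Phi_!M\otimes\Phi_!C$. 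For a monoidal adjunction in the sense used here that comparison map is an \emph{isomorphism}, carrying no homotopical content, whereas the derived unit carries all of it. Consequently your closing claim --- that the comparison maps being weak equivalences on cofibrant objects is ``precisely the content'' of $(\Phi_!,\Phi^*)$ being a Quillen equivalence --- is false: a Quillen equivalence is a statement about derived units and counits (e.g.\ $C\to\Phi^*R\Phi_!C$ for cofibrant $C$), and that hypothesis never actually enters your argument. There is also a problem with the reduction itself: the patching/telescope argument in Proposition \ref{changeofmonoid}(b) is applied to the \emph{underived} unit, which suffices there because $f^*$ preserves all weak equivalences; here $\Phi^*$ does not, so one must handle derived units, and these do not commute with the cellular colimits. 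The paper's Lemma \ref{end} avoids all of this with no induction: since $M$ is well-pointed, cofibrant $M$-modules have cofibrant underlying objects; fibrations and weak equivalences of modules are created on underlying objects; and both adjoints of the module adjunction are computed by $\Phi_!$ resp.\ $\Phi^*$ on underlying objects. Hence the criterion ``for cofibrant $X$ and fibrant $Y$, a map $\Phi_!X\to Y$ is a weak equivalence iff its adjoint $X\to\Phi^!Y$ is'' reduces verbatim to the same criterion for the underlying Quillen equivalence $(\Phi_!,\Phi^*)$.

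A secondary point: your two-step factorization (change of ambient category followed by change of monoid along $\kappa$) is superfluous, and the fallback you offer for the second factor is unjustified. In the paper's setting the left adjoint of a monoidal adjunction is strong symmetric monoidal, and the compatibility $\Phi_!(\QQ_{cB})\cong\Phi_!(\QQ)_{\Phi_!(cB)}$ is an \emph{isomorphism} of operads, proved purely from universal properties (Lemma \ref{changeenvoperad}: for $\beta:\Phi_!\QQ\to\PP$ with adjoint $\gamma:\QQ\to\Phi^*\PP$, one has $\Phi^!\circ\beta^*=\gamma^*\circ\Phi^*$); so $\kappa$ is an isomorphism and the change-of-monoid factor disappears. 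Were one genuinely in a ``weak monoidal'' situation with comparison maps only weak equivalences, your assertion that $\kappa$ is then a weak equivalence would not follow formally: the enveloping algebra is built from coequalizers and quotients by symmetric group actions, and propagating weak equivalences through that construction is exactly the kind of statement that requires the $\Sigma$-cofibrancy machinery (it is the substance of Schwede--Shipley-type comparison theorems), not a one-line consequence of the comparison maps being weak equivalences.
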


\begin{proof}First notice that the monoidal adjunction induces adjoint functors $\Phi_!:\Oper(\FF)\lra\Oper(\EE):\Phi^*$, and that if $\QQ$ is a $\Sg$-cofibrant operad in $\FF$, then $\Phi_!(Q)$ is so in $\EE$. By assumption, $Q$ and $\Phi_!(Q)$ are both admissible so that we have an induced Quillen adjunction$$\Phi_!:\Alg_\FF(\QQ)\lra\Alg_\EE(\Phi_!(Q)):\Phi^!,$$where $\Phi^!(A)$ is $\Phi^*(A)$ with the $Q$-algebra structure induced by $\eta_Q:Q\to\Phi_!\Phi^*(Q)$.

Now let $cB\eqv B$ be a cofibrant resolution of the $\QQ$-algebra $B$. Then the derived category $\DD_{\Phi_!(\QQ)}(L\Phi_!B)$ is the derived category of modules for the monoid $\Phi_!(Q)_{\Phi_!(cB)}(1)$, while $\DD_Q(B)$ is that for the monoid $\QQ_{cB}(1)$. By Lemma \ref{changeenvoperad} below and the assumption on units, we have an isomorphism of well-pointed monoids $\Phi_!(Q)_{\Phi_!(cB)}(1)\cong\Phi_!(\QQ_{cB})(1)$ in $\EE$ whence Lemma \ref{end} gives the result.\end{proof}

\begin{lma}\label{changeenvoperad}Under the hypotheses of Theorem \ref{comparison2}, each $Q$-algebra $B$ induces a canonical isomorphism of operads $\Phi_!(Q_B)\cong\Phi_!(Q)_{\Phi_!(B)}.$\end{lma}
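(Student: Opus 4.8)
The plan is to deduce the isomorphism from the universal property of the enveloping operad, by testing both sides against an arbitrary operad $\RR$ in $\EE$ and appealing to Yoneda. Throughout I will use that $\Phi_!$, being the left adjoint of a monoidal adjunction, is strong symmetric monoidal; hence it carries operads to operads levelwise (so that $(\Phi_!\QQ)(n)=\Phi_!(\QQ(n))$, and in particular $(\Phi_!\QQ)(0)=\Phi_!(\QQ(0))$) and carries $\QQ$-algebras to $\Phi_!\QQ$-algebras. This produces the operad adjunction $\Phi_!:\Oper(\FF)\lra\Oper(\EE):\Phi^*$ (with $\Phi^*$ levelwise, $(\Phi^*\RR)(n)=\Phi^*(\RR(n))$) and the algebra adjunction $\Phi_!:\Alg_\FF(\QQ)\lra\Alg_\EE(\Phi_!\QQ):\Phi^!$ already invoked in the proof of Theorem \ref{comparison2}.

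Fixing $\RR$, I would first rewrite the left-hand Hom-set. By the operad adjunction, $\Oper(\EE)(\Phi_!(\QQ_B),\RR)\cong\Oper(\FF)(\QQ_B,\Phi^*\RR)$, and by the universal property of the enveloping operad $\QQ_B$ in $\FF$ this is the set of pairs $(\phi,\psi)$ with $\phi\colon\QQ\to\Phi^*\RR$ an operad map and $\psi\colon B\to\phi^*(\Phi^*\RR)(0)$ a $\QQ$-algebra map. On the right, the universal property of $(\Phi_!\QQ)_{\Phi_!B}$ presents $\Oper(\EE)((\Phi_!\QQ)_{\Phi_!B},\RR)$ as the set of pairs $(\chi,\xi)$ with $\chi\colon\Phi_!\QQ\to\RR$ an operad map and $\xi\colon\Phi_!B\to\chi^*\RR(0)$ a $\Phi_!\QQ$-algebra map. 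The operad adjunction already supplies the bijection $\chi\leftrightarrow\phi$ on the first components, via $\phi=\Phi^*(\chi)\circ\eta_\QQ$.

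The heart of the argument is the matching of second components, compatibly with $\chi\leftrightarrow\phi$. For this I would apply the algebra adjunction $\Phi_!\dashv\Phi^!$ to the target: $\Alg_\EE(\Phi_!\QQ)(\Phi_!B,\chi^*\RR(0))\cong\Alg_\FF(\QQ)(B,\Phi^!(\chi^*\RR(0)))$. It then remains to identify, as $\QQ$-algebras, $\Phi^!(\chi^*\RR(0))$ with $\phi^*(\Phi^*\RR)(0)=\phi^*\Phi^*(\RR(0))$. Both have underlying object $\Phi^*(\RR(0))$, and tracing the two $\QQ$-actions shows each to be the composite $\QQ\xrightarrow{\eta_\QQ}\Phi^*\Phi_!\QQ\xrightarrow{\Phi^*\chi}\Phi^*\RR\to\End_{\Phi^*(\RR(0))}$, where the last arrow is the tautological action of the operad $\Phi^*\RR$ on its arity-zero part. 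I expect this identification to be the main obstacle: it is the coherence statement that $\Phi^!$ intertwines restriction of scalars in $\EE$ (along $\chi$) with restriction of scalars in $\FF$ (along $\phi$), and checking it requires a diagram chase through the construction of $\Phi^!$ and the lax monoidal comparison map $\Phi^*\End_{\RR(0)}\to\End_{\Phi^*(\RR(0))}$, together with the fact that $\Phi^*$ carries the tautological action of $\RR$ on $\RR(0)$ to that of $\Phi^*\RR$ on $\Phi^*(\RR(0))$.

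Granting this identification, composing the three bijections gives $\Oper(\EE)(\Phi_!(\QQ_B),\RR)\cong\Oper(\EE)((\Phi_!\QQ)_{\Phi_!B},\RR)$, and each constituent bijection is natural in $\RR$; so Yoneda yields the desired canonical isomorphism of operads $\Phi_!(\QQ_B)\cong(\Phi_!\QQ)_{\Phi_!B}$. The isomorphism is canonical in the sense that the underlying comparison map is the mate, under the two enveloping-operad reflections $\Pairs\to\Oper$, of the evident equality $\Phi_!(\mathcal{S},\mathcal{S}(0))=(\Phi_!\mathcal{S},(\Phi_!\mathcal{S})(0))$ expressing that $\Phi_!$ commutes with the embeddings $\Oper\hookrightarrow\Pairs$.
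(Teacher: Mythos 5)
Your proposal is correct and is essentially the paper's own argument: the paper likewise proves that $\Phi_!(\QQ_B)$ and $\Phi_!(\QQ)_{\Phi_!(B)}$ enjoy the same universal property, reducing everything to the coincidence of the two composites $\Phi^!\circ\chi^*$ and $\phi^*\circ\Phi^*:\Alg_\EE(\RR)\to\Alg_\FF(\QQ)$ for an operad map $\chi:\Phi_!(\QQ)\to\RR$ with adjoint $\phi:\QQ\to\Phi^*(\RR)$ --- exactly the identification you isolate as the ``main obstacle.'' The paper asserts this coherence fact without further verification, so your sketch of the check (both $\QQ$-actions on $\Phi^*(\RR(0))$ factor as $\QQ\to\Phi^*\Phi_!\QQ\to\Phi^*\RR\to\End_{\Phi^*(\RR(0))}$) is, if anything, slightly more detailed than the published proof.
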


\begin{proof}It is sufficient to check that both operads enjoy the same universal property. This in turn follows from the fact that for each $\beta:\Phi_!(\QQ)\to\PP$ with adjoint $\gamma:\QQ\to\Phi^*(\PP)$, the composite map $\Phi^!\circ\beta^*:\Alg_\EE(\PP)\to\Alg_\EE(\Phi_!(\QQ))\to\Alg_\FF(\QQ)$ coincides with the composite map $\gamma^*\circ\Phi^*:\Alg_\EE(\PP)\to\Alg_\FF(\Phi^*(\PP))\to\Alg_\FF(\QQ)$.\end{proof}

\begin{lma}\label{end}Under the hypotheses of Theorem \ref{comparison2}, each well-pointed monoid $M$ in $\FF$ induces a well-pointed monoid $\Phi_!(M)$ in $\EE$, and there is a Quillen adjunction$$\Phi_!:\Mod_\FF(M)\lra\Mod_\EE(\Phi_!M):\Phi^!.$$This adjunction is a Quillen equivalence, provided $\Phi_!:\FF\lra\EE:\Phi^*$ is a Quillen equivalence.\end{lma}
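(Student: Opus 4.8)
The plan is to establish the three assertions of Lemma~\ref{end} in turn, leaning on the monoidal structure of $\Phi_!$ at each step. First I would check well-pointedness. Since $\Phi_!$ is a strong symmetric monoidal left Quillen functor, it sends the monoid $M$ to a monoid $\Phi_!(M)$ in $\EE$, and the unit map $I_\EE\to\Phi_!(M)$ factors as $I_\EE\to\Phi_!(I_\FF)\to\Phi_!(M)$ through the comparison map on units. The hypothesis of Theorem~\ref{comparison2} that $\Phi_!(I_\FF)\to I_\EE$ is a cofibration, together with the fact that $\Phi_!$ (being left Quillen) preserves the cofibration $I_\FF\to M$, shows that $I_\EE\to\Phi_!(M)$ is a cofibration in $\EE$; hence $\Phi_!(M)$ is well-pointed.

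Second I would produce the adjunction on module categories. By Theorem~\ref{envelope} (or directly), $\Mod_\FF(M)$ is the category of left $M$-modules and $\Mod_\EE(\Phi_!M)$ the category of left $\Phi_!(M)$-modules. The strong monoidal left adjoint $\Phi_!$ carries an $M$-module $X$, with action $M\otimes_\FF X\to X$, to an object $\Phi_!(X)$ with action $\Phi_!(M)\otimes_\EE\Phi_!(X)\cong\Phi_!(M\otimes_\FF X)\to\Phi_!(X)$, so $\Phi_!$ lifts to module categories; its right adjoint $\Phi^!$ sends a $\Phi_!(M)$-module $Y$ to $\Phi^*(Y)$ equipped with the $M$-action adjoint to the $\Phi_!(M)$-action via the unit $\eta_M:M\to\Phi^*\Phi_!(M)$. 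Since both module categories carry the transferred model structures of Proposition~\ref{changeofmonoid}(a) (using well-pointedness of $M$ and of $\Phi_!(M)$), and $\Phi^!$ is computed on underlying objects by the right Quillen functor $\Phi^*$, the lifted adjunction $(\Phi_!,\Phi^!)$ is a Quillen adjunction, the fibrations and trivial fibrations being detected in $\FF$ and $\EE$ respectively.

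Third, and this is where the real work lies, I would show that $(\Phi_!,\Phi^!)$ is a Quillen equivalence whenever $(\Phi_!,\Phi^*)$ is. Following the pattern of Proposition~\ref{changeofmonoid}(b), it suffices to verify that the derived unit $X\to\Phi^!\mathbb{R}\Phi_!(X)$ is a weak equivalence at cofibrant $M$-modules $X$, and by the patching and telescope lemmas one reduces to free modules of the form $X=M\otimes_\FF C$ with $C$ cofibrant in $\FF$. On such a module $\Phi_!(X)=\Phi_!(M)\otimes_\EE\Phi_!(C)$ is $\Phi_!(M)$-cofibrant, so the derived unit is the underived unit $M\otimes_\FF C\to\Phi^*\bigl(\Phi_!(M)\otimes_\EE\Phi_!(C)\bigr)$, and the free-module structure lets us identify this with $M\otimes_\FF(C\to\Phi^*\Phi_!(C))$, i.e. $M\otimes_\FF(\eta_C)$. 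The main obstacle is to pass from the hypothesis that $(\Phi_!,\Phi^*)$ is a Quillen equivalence---which gives that $\eta_C:C\to\Phi^*\Phi_!(C)$ is a weak equivalence for cofibrant $C$ (with $\Phi_!(C)$ fibrantly resolved if necessary)---to the conclusion that tensoring with the well-pointed monoid $M$ preserves this weak equivalence. This is exactly a pushout-product-and-Brown's-Lemma argument: tensoring with the cofibrant object underlying the well-pointed $M$ is a left Quillen functor on the arrow category, so by K.~Brown's Lemma it carries the weak equivalence $\eta_C$ between cofibrant objects to a weak equivalence, giving the desired unit condition and hence the Quillen equivalence.
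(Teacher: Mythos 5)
Your first two steps are essentially fine and agree with what the paper leaves implicit: $\Phi_!(M)$ is well-pointed because $\Phi_!$, being left Quillen, preserves the cofibration $I_\FF\to M$, together with the hypothesis on units (though watch the direction: the cofibration you need points \emph{out of} $I_\EE$, namely $I_\EE\to\Phi_!(I_\FF)\to\Phi_!(M)$, and your appeal to the map $\Phi_!(I_\FF)\to I_\EE$ only parses because you have already assumed strong monoidality, under which the two comparison maps differ by an isomorphism); and the lifted adjunction is Quillen because $\Phi^!$ is $\Phi^*$ on underlying objects while fibrations and weak equivalences of modules are detected there.

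The Quillen-equivalence step, however, has a genuine gap, concentrated in the free-module computation. The identification of the unit at $X=M\otimes C$ with $M\otimes\eta_C$ is false: by monoidality of the adjunction, the unit $M\otimes C\to\Phi^*(\Phi_!M\otimes\Phi_!C)$ factors as $M\otimes\eta_C$ \emph{followed by} the comparison map $M\otimes\Phi^*\Phi_!(C)\to\Phi^*(\Phi_!M\otimes\Phi_!C)$ built from $\eta_M$ and the lax structure of $\Phi^*$, and proving that this second map is a weak equivalence is exactly as hard as the original problem; your argument silently discards it. (The analogous identification in Proposition \ref{changeofmonoid}(b) is legitimate only because restriction along a monoid map in a \emph{fixed} category is the identity on underlying objects; that is precisely what fails across a change of base category.) Two neighbouring steps also break down: ``$\Phi_!(X)$ is cofibrant, so the derived unit is the underived unit'' confuses cofibrancy with fibrancy --- suppressing the fibrant replacement requires $\Phi_!(X)$ to be \emph{fibrant}, or $\Phi^*$ to preserve all weak equivalences, neither of which is available; and K. Brown's Lemma cannot be applied to $M\otimes\eta_C$, since the target $\Phi^*\Phi_!(C)$ (a fortiori $\Phi^*$ of a fibrant replacement) is not cofibrant, and $M$ itself is cofibrant only under $I_\FF$, not in $\FF$, unless $I_\FF$ is cofibrant. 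Finally, your opening reductions need justification in this cross-category setting: the unit-only criterion presupposes that $\Phi^!$ reflects weak equivalences between fibrant modules (true, but itself a consequence of the hypothesis that $(\Phi_!,\Phi^*)$ is a Quillen equivalence), and the patching/telescope reduction requires the composite of $\Phi_!$, fibrant replacement and $\Phi^!$ to preserve homotopy pushouts and telescopes, which is not automatic for a functor involving a right adjoint.

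The missing idea --- and the whole of the paper's proof --- is that cofibrant modules over a well-pointed monoid have cofibrant underlying objects: attaching a free cell $M\otimes u$ is a cofibration in $\FF$, by the pushout-product axiom applied to $I_\FF\to M$ and $u$. Granting this, no cellular induction and no unit computation are needed. For a cofibrant $M$-module $X$ and a fibrant $\Phi_!(M)$-module $Y$, a module map $\Phi_!(X)\to Y$ is a weak equivalence iff its underlying map is one in $\EE$, iff (by the base Quillen equivalence, since the underlying objects of $X$ and $Y$ are respectively cofibrant and fibrant) its base adjoint is a weak equivalence in $\FF$; and that base adjoint is precisely the underlying map of the module adjoint $X\to\Phi^!(Y)$, because $\Phi^!$ is $\Phi^*$ on underlying objects. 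This verifies the two-variable criterion for a Quillen equivalence directly and bypasses every difficulty above.
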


\begin{proof}This follows from the fact that cofibrant modules over well-pointed monoids have cofibrant underlying objects, and that $\Phi^!$ is just $\Phi^*$ on underlying objects.\end{proof}

\section{Appendix}

Section 5 of \cite{BM} is used several times in this paper (for instance in the proofs of Lemma \ref{basic} and Theorems \ref{comparison1} and \ref{comparison2}). There is a small mistake in the proof of \cite[Proposition 5.1]{BM} in that construction 5.11 in loc. cit. does not take care of the unit of the operad extension $P[u]$. In order to remedy this, one has to include in the definition of a $\Sg$-cofibrant operad $P$  the condition that the unit $I\to P(1)$ is a cofibration in $\EE$ (as done in Section $2$ of this paper and in \cite{BM2}). Moreover, construction 5.11 has to be slightly adapted in the following way:

First of all, the inductive construction of $F_k(n)$ should be over the set $\AA_k(n)$ of admissible coloured trees with $n$ inputs and $k$ vertices which are \emph{either coloured or unary}. Next, $\uu^-(T,c)\ito\uu(T,c)$ should be replaced by $\uu^*(T,c)\ito\uu(T,c)$ where $\uu^*(T,c)$ consists of those labelled trees which lie in $\uu^-(T,c)$ or have a unary vertex labelled by the identity. In order to show that $\uu^*(T,c)\ito\uu(T,c)$ is again an $\Aut(T,c)$-cofibration (as was done for $\uu^-(T,c)\ito\uu(T,c)$), one has the following variation of \cite[Lemma 5.9]{BM}.

\begin{lma}Let $\II$ be the collection given by the unit $I$ concentrated in degree $1$, and let $\II\ito\KK_1\overset{u}{\ito}\KK_2$ be cofibrations of collections. Then $\uu^*(T,c)\ito\uu(T,c)$ is an $\Aut(T,c)$-cofibration.\end{lma}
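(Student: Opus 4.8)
The plan is to recognise $\uu^*(T,c)\ito\uu(T,c)$ as a single iterated pushout-product (Leibniz product) of cofibrations indexed by the vertices of $T$, in direct analogy with the proof of \cite[Lemma 5.9]{BM}. Write $\uu(T,c)=\bigotimes_v\KK_2(|v|)$, the tensor product ranging over the vertices $v$ of $T$, where $|v|$ denotes the arity of $v$. The subobject $\uu^-(T,c)$ is the union, over the coloured vertices, of the subobjects in which one factor $\KK_2(|v|)$ is replaced by $\KK_1(|v|)$; passing to $\uu^*(T,c)$ adjoins, over each unary vertex $w$ carrying no colour, the subobject in which the factor $\KK_2(1)$ is replaced by the image of the unit map $\II(1)=I\to\KK_2(1)$ (at a vertex that is at once coloured and unary the coarser $\KK_1$-restriction already absorbs the unit one, so nothing new is added there). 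Consequently $\uu^*(T,c)$ is exactly the domain of the Leibniz product of the family consisting of $u:\KK_1(|v|)\ito\KK_2(|v|)$ for each coloured vertex $v$ and $I\to\KK_2(1)$ for each remaining unary vertex $w$, and $\uu^*(T,c)\ito\uu(T,c)$ is precisely this Leibniz product.

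First I would verify that each factor of the Leibniz product is a cofibration. For a coloured vertex this is the hypothesis that $u:\KK_1\ito\KK_2$ is a cofibration of collections. For a unary vertex the map $I\to\KK_2(1)$ is the degree-$1$ component of the composite $\II\ito\KK_1\ito\KK_2$; as both constituents are cofibrations of collections by hypothesis, so is the composite, and hence $I\ito\KK_2(1)$ is a cofibration in $\EE$. This is precisely where the hypothesis that $\II\ito\KK_1$ is a cofibration enters: it encodes the condition on the operad unit whose omission was the original oversight, and without it the unary factors would fail to be cofibrant and the whole scheme would collapse. Granting it, the pushout-product axiom applied iteratively over the vertices shows that $\uu^*(T,c)\ito\uu(T,c)$ is a cofibration in $\EE$.

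It remains to upgrade this to an $\Aut(T,c)$-equivariant statement, which is the heart of the matter and is handled exactly as in \cite[Lemma 5.9]{BM}. The group $\Aut(T,c)$ permutes the tensor factors of $\uu(T,c)$ --- carrying each vertex to one of the same arity and colour, hence coloured vertices to coloured vertices and unary vertices to unary vertices --- and acts on each factor $\KK_i(|v|)$ through the symmetric group $\Sg_{|v|}$, so that the chosen family of factor-cofibrations is $\Aut(T,c)$-equivariant. Filtering the Leibniz product by the number of factors that remain equal to $\KK_2$, one checks as in \cite[Lemma 5.9]{BM} that the successive layers are coproducts, over the $\Aut(T,c)$-orbits of configurations, of cells induced up from the stabiliser of each configuration. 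Since cofibrations of collections are by definition degreewise $\Sg$-cofibrations, each factor is an equivariant cofibration, and one deduces by induction on the filtration that these induced cells are $\Aut(T,c)$-cofibrations; the filtration then exhibits $\uu^*(T,c)\ito\uu(T,c)$ as an $\Aut(T,c)$-cofibration.

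The main obstacle is this last, equivariant, bookkeeping: one must organise the cube filtration so that $\Aut(T,c)$ genuinely permutes its layers and so that the cells of each layer are induced from stabilisers compatibly with the $\Sg$-cofibrancy of the factors. The single feature absent from \cite[Lemma 5.9]{BM} is that unary vertices are now counted among the vertices of $T$ and contribute their own factors $I\ito\KK_2(1)$; I would therefore take particular care that these unit-factors enter the filtration on exactly the same footing as the coloured factors, so that the induction preserving $\Aut(T,c)$-cofibrancy goes through unchanged. Once this compatibility is in place --- guaranteed by the unit map being a cofibration of collections in degree $1$ --- the argument runs in complete parallel with that of \cite[Lemma 5.9]{BM}.
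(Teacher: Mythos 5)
Your overall strategy --- exhibiting $\uu^*(T,c)\ito\uu(T,c)$ as an iterated pushout-product (Leibniz product) of vertex-wise cofibrations and then handling $\Aut(T,c)$-equivariance as in \cite[Lemma 5.9]{BM} --- is the same in spirit as the paper's proof, which unrolls exactly this pushout-product by induction on the tree. But your description of the objects involved is wrong, and this is a genuine error rather than a notational one. You set $\uu(T,c)=\bigotimes_v\KK_2(|v|)$ over \emph{all} vertices of $T$, whereas in the construction of \cite{BM} which this lemma serves, the two collections play different roles: \emph{uncoloured} vertices are labelled by $\KK_1$ and only \emph{coloured} vertices are labelled by $\KK_2$ (in the operadic application, $\KK_1$ is the underlying collection of the operad being extended, and it is precisely this two-tier labelling that makes the attaching maps of construction 5.11 land in the previous filtration stage). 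This is visible in the paper's own inductive description: $\uu^*(T,c)=\KK_1(n)\otimes\vv^*(T,c)$ when the root is uncoloured and not unary, which forces $\uu(T,c)=\KK_1(n)\otimes\vv(T,c)$ there, with $\uu(T,c)=\KK_2(n)\otimes\vv(T,c)$ only at coloured roots. Consequently the Leibniz factor at an uncoloured unary vertex must be $I\ito\KK_1(1)$ --- literally the new hypothesis $\II\ito\KK_1$ in degree $1$ --- and not the composite $I\ito\KK_2(1)$ that you use. As written, your argument proves a (true) pushout-product statement about different objects, not the statement about the $\uu^*(T,c)\ito\uu(T,c)$ of the lemma.

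Once the labels are corrected, your argument does go through and essentially reproduces the paper's: the paper proves the lemma by induction over the tree exactly as in \cite[Lemma 5.9]{BM}, the only new case being that of an uncoloured \emph{unary} root, where the map takes the form $I\otimes B\cup_{I\otimes A}\KK_1(1)\otimes A\ito\KK_1(1)\otimes B$, i.e.\ the pushout-product of $I\ito\KK_1(1)$ with the inductively known $\Aut$-cofibration $A\ito B$, to which the equivariant pushout-product lemma \cite[Lemma 5.10]{BM} applies. Your alternative global filtration by the number of ``full'' factors can be made to work, but it requires the same equivariant pushout-product input at every stage; the tree-wise induction makes the $\Aut(T,c)$-bookkeeping automatic through the wreath-product structure of automorphism groups of trees, which is why the paper (and \cite{BM}) organizes the proof that way.
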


\begin{proof}The inductive definition of $\uu(T,c)$ given in \cite[page 828, lines 2-3]{BM} comes together with a similar inductive description of $\uu^*(T,c)$: $$\uu^*(T,c)=\begin{cases}\KK_1(n)\otimes\vv^*(T,c)&\textrm{if the root is uncoloured, not unary},\\\KK_1(1)\otimes\vv^*(T,c)\cup I\otimes\vv(T,c)&\textrm{if the root is uncoloured, unary},\\\KK_2(n)\otimes\vv^*(T,c)\cup\KK_1(n)\otimes\vv(T,c)&\textrm{if the root is coloured},\end{cases}$$where we have abbreviated\begin{align*}\vv^*(T,c)&=\bigcup_{i=1}^n\uu(T_1,c_1)\otimes\cdots\otimes\uu^*(T_i,c_i)\otimes\cdots\otimes\uu(T_n,c_n),\\\vv(T,c)&=\uu(T_1,c_1)\otimes\cdots\otimes\uu(T_n,c_n).\end{align*}

One now proves by induction that $\uu^*(T,c)\ito\uu(T,c)$ is an $\Aut(T,c)$-cofibration, exactly as in the proof of \cite[Lemma 5.9]{BM}; towards the end, the case where the root is uncoloured splits into two subcases: the one where the root is not unary is as in \cite{BM}, while for the one where the root is unary, the map $\uu^*(T,c)\ito\uu(T,c)$ is of the form $I\otimes B\cup_{I\otimes A}\KK_1(1)\otimes A\ito\KK_1(1)\otimes B$, and \cite[Lemma 5.10]{BM} applies again. (Note that a ``$G$'' is missing in line 3 of Lemma 5.10, compare \cite[Lemma 2.5.3]{BM2} and the remark following it).\end{proof}

\vspace{5ex}

\noindent{\sc Universit\'e de Nice, Laboratoire J.-A. Dieudonn\'e, Parc Valrose, 06108 Nice Cedex, France.}\hspace{2em}\emph{E-mail:} cberger$@$math.unice.fr\vspace{2ex}

\noindent{\sc Mathematisch Instituut, Postbus 80.010, 3508 TA Utrecht, The Ne-therlands.}\hspace{2em}\emph{E-mail:} moerdijk$@$math.uu.nl

\end{document}